\documentclass[oneside,12pt]{article}

\usepackage{amsmath}
\usepackage{amssymb}
\usepackage{pxfonts}
\usepackage{graphicx}
\usepackage{subcaption}
\usepackage{eucal}
\usepackage{mathrsfs}
\usepackage{theorem}
\usepackage{pifont}
\usepackage{color}
\usepackage{enumitem}
\usepackage[margin=2cm]{geometry}
\usepackage[backref=page]{hyperref}
\usepackage[normalem]{ulem}
\usepackage{dsfont}

\usepackage{pgfplots}

\usepackage{tocloft}

\definecolor{shadecolor}{rgb}{0.8,0.8,0.8}

\usepackage{amsmath}
\usepackage{amssymb}
\usepackage{graphicx}
\usepackage{eucal}
\usepackage{mathrsfs}
\usepackage{pifont}
\usepackage{color}
\usepackage{cjhebrew}

\usepackage[all]{xy}
\usepackage{tikz} % Required for drawing custom shapes
\usepackage{tkz-euclide}
\newcommand{\btkz}{\begin{tikzpicture}}
\newcommand{\etkz}{\end{tikzpicture}}

\newcommand{\brk}[1]{\left(#1\right)}          % \brk{.}     => (.)
\newcommand{\BRK}[1]{\left\{#1\right\}}        % \BRK{.}     => {.}
\newcommand{\Norm}[1]{\left\| #1 \right\|}     % \Norm{.}    => ||.||

\newcommand{\secref}[1]{Section~\ref{#1}}
\newcommand{\figref}[1]{Figure~\ref{#1}}
\newcommand{\thmref}[1]{Theorem~\ref{#1}}

\newcommand{\propref}[1]{Proposition~\ref{#1}}
\newcommand{\lemref}[1]{Lemma~\ref{#1}}

\newcommand{\beq}{\begin{equation}}
\newcommand{\eeq}{\end{equation}}
\newcommand{\bsplit}{\begin{split}}
\newcommand{\esplit}{\end{split}}
\newcommand{\baligned}{\begin{aligned}}
\newcommand{\ealigned}{\end{aligned}}

\newcommand{\Emph}[1]{{\slshape\bfseries #1}}  % \Emph{.}

\providecommand{\e}{\varepsilon}

\providecommand{\R}{\bbR}

\newcommand{\Textand}{\qquad\text{ and }\qquad}

\providecommand{\vp}{\varphi}

\newcommand{\sgn}{{\operatorname{sgn}}}

\newcommand{\calM}{{\mathcal M}}
\newcommand{\calN}{{\mathcal N}}

\newcommand{\frake}{\mathfrak{e}}

\newcommand{\frakg}{\mathfrak{g}}

\newcommand{\bbC}{{\mathbb C}}

\newcommand{\bbN}{{\mathbb N}}

\newcommand{\bbR}{{\mathbb R}}
\newcommand{\bbS}{{\mathbb S}}

\newcommand{\bbZ}{{\mathbb Z}}

\theoremheaderfont{\bfseries\rmfamily}
\newtheorem{theorem}{Theorem}[section]
\newtheorem{lemma}[theorem]{Lemma}
\newtheorem{proposition}[theorem]{Proposition}

\newtheorem{definition}[theorem]{Definition}

\theorembodyfont{\rmfamily}
\newtheorem{remark}[theorem]{Remark}

\newenvironment{proof}{{\flushleft \emph{Proof}:}}{\hfill\ding{110}}
\newenvironment{proof1}[1]{{\flushleft \emph{Proof #1}:}}{\hfill\ding{110}}

\usepackage{framed}
\usepackage{enumitem}
\definecolor{shadecolor}{rgb}{0.90,0.90,0.90}

\usepackage{xcolor} % Required for specifying colors by name
\definecolor{ocre}{RGB}{243,102,25}
\definecolor{darkocre}{RGB}{121,51,12}
\definecolor{lightocre}{RGB}{255,150,37}
\definecolor{verylightocre}{RGB}{255,204,50}
\definecolor{lightgray}{RGB}{200,200,200}
\definecolor{warmblue}{RGB}{51,102,153}
\definecolor{lightwarmblue}{RGB}{105,141,198}
\definecolor{sepia}{RGB}{112,66,20}

\newcommand{\W}{\Omega}
\newcommand{\M}{\calM}
\renewcommand{\Emph}[1]{{\bfseries #1}}
\newcommand{\n}{\calN}

\newcommand{\ind}{\operatorname{ind}}
\newcommand{\VolG}{\operatorname{dVol}_\g}
\newcommand{\ip}[1]{\langle #1 \rangle}
\newcommand{\g}{\frakg}
\newcommand{\nabg}{\nabla^\g}
\newcommand{\euc}{\frake}

\newcommand{\diam}{\operatorname{diam}}

\DeclareMathOperator*{\osc}{osc}

\newcommand{\Wiso}{W^{2,2}_\g(\M;\R^3)}
\newcommand{\II}{\operatorname{II}}
\newcommand{\Eb}{E_\text{bend}}

\newcommand{\Sph}{\bbS^2}
\newcommand{\dB}{\partial B}

\numberwithin{equation}{section}

\begin{document}

\title{Branch points and non-density for finite-bending isometric immersions of hyperbolic surfaces}
\author{Gilad Derfner\footnotemark[1] \and Raz Kupferman\thanks{Einstein Institute of Mathematics, Hebrew University of Jerusalem} \and Cy Maor\footnotemark[1]}
\date{}

\maketitle

\begin{abstract}
  The space of $W^{2,2}$-isometric immersions of a surface into $\R^3$ arises naturally in the variational theory of thin elastic sheets: it is precisely the finite-bending class, where the bending energy --- the $L^2$-norm of the second fundamental form --- is finite.
  For sheets with negative Gaussian curvature, previous work has identified branch points, where ``too many'' asymptotic directions meet --- or, equivalently, where the index of the Gauss map is not $-1$ --- as a potentially important mechanism in shape selection and pattern formation.
  Such branch points are precluded for $C^2$-isometric immersion.

  We show that this index-based notion of branch points extends to the full finite-bending class:
  Namely, for every $W^{2,2}$ isometric immersion of a negatively-curved surface, the index of the Gauss map is well-defined at every point, and the set of branch points is discrete.
  We further show that the index is stable under $W^{2,2}$-convergence, and thus, an isometric immersion with branch points cannot be approximated by $C^2$-isometric immersions.
  Conversely, we show that every negatively-curved metric locally admits $W^{2,2}$-isometric immersions (in fact, $C^{1,1}$) with branch points of arbitrary order.
  Consequently, $C^2$-isometric immersions are, in general, not dense among finite-bending ones, in stark contrast with the flat and positively curved cases.
\end{abstract}

\setcounter{tocdepth}{1}
\begingroup
\footnotesize
\tableofcontents
\endgroup
%%%%%%%%%%%%%%%%%%%%%%%%%%%%%%%%
\section{Introduction}

The study of isometric immersions of a Riemannian manifold $(\M,\g)$ in Euclidean space $(\R^d,\euc)$ is among the most fundamental subjects in Riemannian geometry.
It is well-known, since the work of Nash, that for a given metric, isometric immersions of different regularities can behave significantly differently \cite{Nas54}.
An extreme example of this is that smooth isometric immersions of the round sphere are unique up to rigid motions (Cohn-Vossen's theorem \cite{HC52}), whereas $C^1$ isometric immersions can approximate any short map (Nash--Kuiper \cite{Kui55}).
Of particular interest is the case of immersions of surfaces ($\dim\M=2$), into three-dimensional Euclidean space ($d=3$);
we will assume these dimensions from here onwards.

A particularly important class of isometric immersions are those of $W^{2,2}$ regularity, that is, maps $f:\M\to \R^3$ satisfying the isometry condition $df^Tdf =f^*\euc= \g$ almost everywhere, and having a square integrable second fundamental form ($\euc$ represents the Euclidean metric on $\R^3$).
The square of the $L^2$ norm of the second fundamental form is known as the \emph{bending energy} of the immersion:
\beq\label{eq:bending_energy}
\Eb(f) = \int_\M |\II|^2 \, \VolG = \int_\M |d\n|^2 \,\VolG,
\eeq
where $\II$ is the second fundamental form of $f$ in $\R^3$, $\n$ is its normal (the Gauss map), and the norms $|\cdot|$ are taken with respect to $\g$.
This energy is a central object in mathematical elasticity theory, as it arises as the leading order elastic energy for thin elastic plates \cite{FJM02b}.
It is also equivalent to the Willmore functional, which is well-studied in geometric analysis with many applications (e.g., \cite{Wil92}).
The space of $W^{2,2}$ isometric immersions, which we denote by $\Wiso$, is the natural space to study these functionals.

In certain cases, the space $\Wiso$ is rather well-behaved, in the sense that its elements have similar properties as smooth immersions:
\begin{enumerate}
  \item If $\M \subset \R^2$ is a Lipschitz domain with a piecewise continuously-differentiable boundary, and $\g$ is \emph{flat} (i.e., its Gaussian curvature $K_\g$ vanishes identically), then the maps in $\Wiso$ are developable, continuously differentiable, and approximable by smooth isometric immersions:
        \[
          \overline{C^\infty(\bar{\M};\R^3)\cap \Wiso }^{W^{2,2}} = \Wiso \subset C^1(\M;\R^3),
        \]
        where the notation $\overline{\,\cdot\,}^{W^{2,2}}$ represents the closure with respect to the $W^{2,2}$ topology \cite{Pak04,MP05,Hor11,Hor11b}.
        These results have been widely used in the elasticity of thin sheets (e.g., \cite{HNV14,Kup17,BPP25}).

  \item If $\g$ is \emph{elliptic}, i.e., $K_\g >c>0$, then, as shown in \cite{HV18}, finite-bending isometric immersions are smooth:
        \[
          \Wiso \subset C^\infty(\M;\R^3).
        \]
        In particular, any property of smooth isometric immersions (e.g., convexity) holds in this case.
\end{enumerate}
See also \cite{MS95,KMP26} for some other regularity properties of finite-bending immersions.
The goal of this paper is to show that for isometric immersions of \emph{hyperbolic} surfaces ($K_\g<0$) the behavior is significantly different from the flat and elliptic cases: smooth isometric immersions are \emph{not} dense in the space of isometric immersions having finite-bending, due to a fundamental difference in their behavior --- finite-bending isometric immersions may have \emph{branch points}.

\paragraph{Finite-bending isometric immersion of hyperbolic surfaces}
Consider a $C^2$ isometric immersion of a surface having negative Gaussian curvature.
Since the second fundamental form is negatively-defined (its determinant is $K_\g<0$ by Gauss' theorem), it has two null-directions; these are called \emph{asymptotic directions}.
On the other hand, there exist $C^{1,1}$ (or, equivalently, $W^{2,\infty}$) isometric immersions of hyperbolic surfaces that have, at certain points, more than two asymptotic directions;
such points are called \emph{branch points} \cite{GSSV16,SV21}.
See \cite[\S7]{GV11} and \cite{SV21} for various examples of isometric immersions of domain in the hyperbolic plane exhibiting branch points.\footnote{We also mention that $C^{1,1}$ isometric immersions of hyperbolic surfaces were recently studied from different perspectives \cite{CSW10,Li20}. See also \cite{Li26} for isometric immersions of lower regularity.}

An alternative and more useful definition of a branch point of an immersion $f$ is the following:
\begin{definition}
  Let $\g$ be a metric of negative Gaussian curvature, and let $f\in \Wiso$.
  A \Emph{Branch point} of $f$ is a point $p\in \M$ at which the \emph{index} (also referred to as the \emph{local degree}) of the Gauss map $\n$ at $p$ does not equal $-1$ (see \secref{sec:local_degree} for a definition of the index).
  If $p$ is a branch point whose index is $m$, we say that the order of the branch point is $1-m$.
\end{definition}

Note that for a $C^2$ isometric immersion of a hyperbolic surface, the Gauss map $\n$ is a $C^1$ orientation-reversing map, hence has index $-1$ at every point, and thus there are no branch points.
The order of the branch point at $p$ corresponds to half of the number of asymptotic rays emanating from $p$.
Note also that it is not a-priori clear that the index is well-defined at any $p\in\M$ for $f\in \Wiso$; this is part of Theorem~\ref{thm:main} below.

The occurrence of branch points in finite-bending isometric immersions is not just a mathematical phenomenon, but may be central for understanding some central questions in the behavior of physical systems:
Venkataramani and collaborators \cite{GV11,GSSV16,SV21} conjectured, and gave some numerical evidence, that when the surface is ``big enough", the occurrence of branch points can lower the bending energy, and is the source of fractal-like shapes exhibited by many biological and physical systems with underlying hyperbolic geometry (e.g., \cite{SRMSS02, SMS04, SRS07,KES07}), see \figref{fig:fractals}.
\begin{figure}
  \centering
  \subcaptionbox{}{\includegraphics[height=4.7cm]{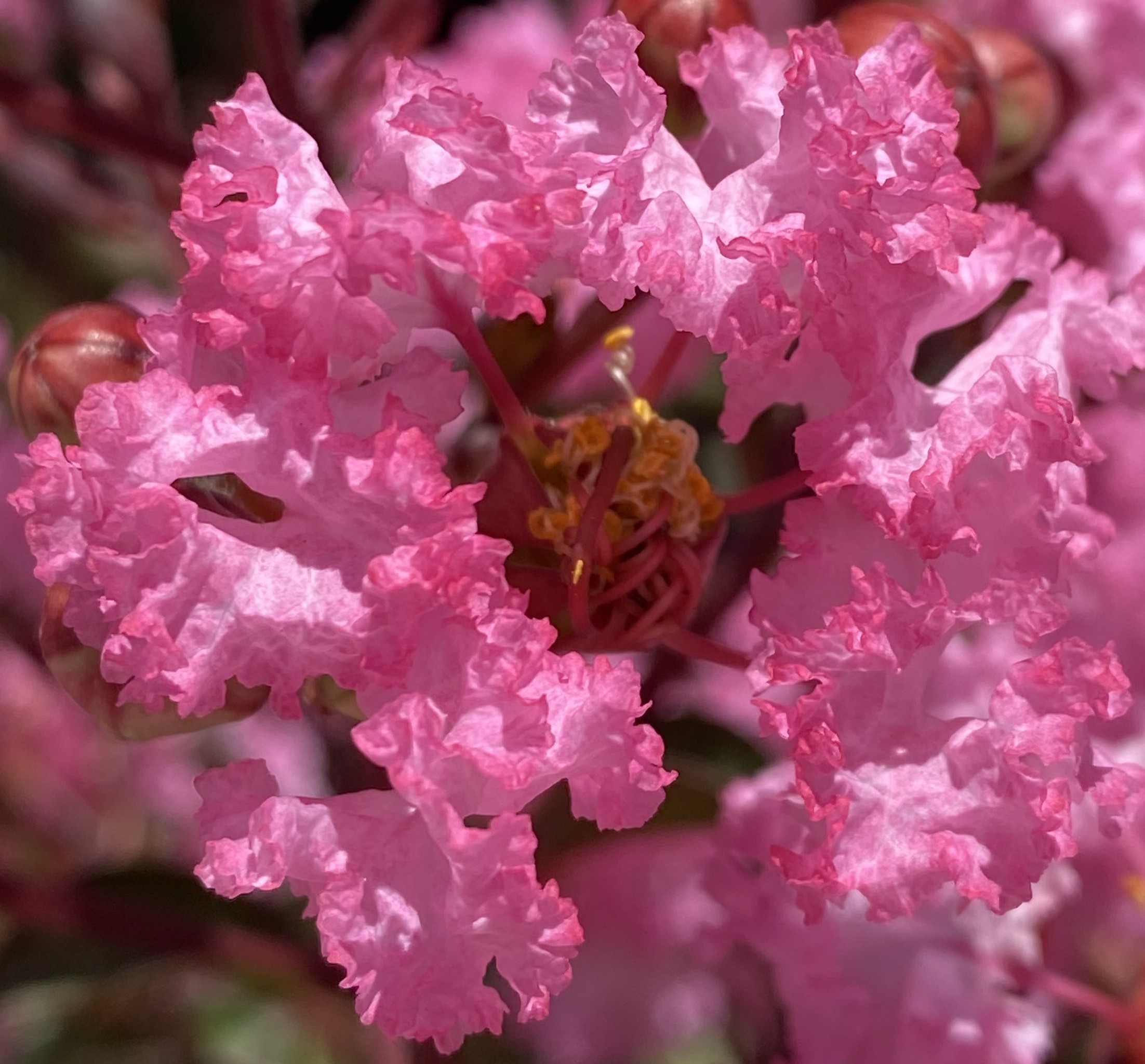}}
  \hfill
  \subcaptionbox{}{\includegraphics[height=4.7cm]{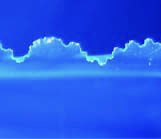}}
  \hfill
  \subcaptionbox{}{\includegraphics[height=4.7cm]{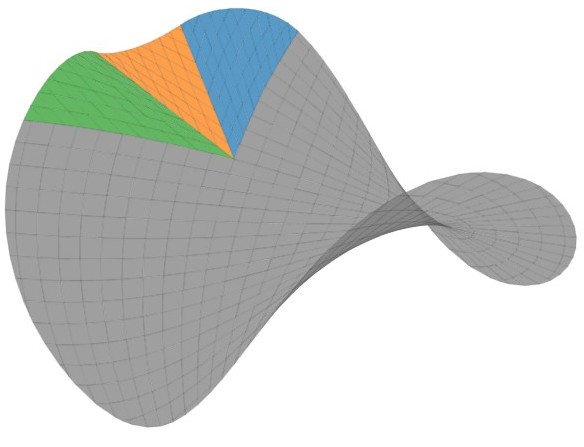}}
  \hfill
  \caption{Self-similar buckling patterns in hyperbolic sheets: (a) Flowers of a Lagerstroemia (photo by GD). (b) Torn nylon (from \cite{SMS04}). (c)
    A numerical calculation exemplifying the introduction of a branch point near the boundary of a surface (from \cite{SV21}), suggested as a potential mechanism for bending energy reduction.}
  \label{fig:fractals}
\end{figure}
Understanding this behavior of hyperbolic sheets is still a major open question in incompatible/non-Euclidean elasticity, and has been addressed from various approaches \cite{AB03,GV13,BK14, SV21}.
Since branch-points do not occur in $C^2$ isometric immersions, this conjecture implies that when the surface is in some sense sufficiently large  (possibly, when the total curvature is large enough), \beq\label{eq:energy_gap}
\inf_{W^{2,2}_\g} \Eb < \inf_{W^{2,2}_\g\cap C^2} \Eb.
\eeq
Since the bending energy is continuous with respect to the $W^{2,2}$ topology, this inequality can hold only if
\beq\label{eq:W22non_density}
\overline{C^2(\M;\R^3)\cap \Wiso }^{W^{2,2}} \subsetneq \Wiso,
\eeq
in stark contrast to the elliptic and flat cases described above.
In \cite[Theorem 3.22]{SV21} a non-density result of the type \eqref{eq:W22non_density} was proven, however under an additional assumption:
It was shown that if $(\M,\g)$ is part of the hyperbolic plane, and if $f\in W^{2,\infty}_\g(\M;\R^3)$ is an isometric immersion exhibiting a branch point, then $f$ cannot be approximated in the $W^{2,2}$ topology by $C^2$ isometric immersions that are \emph{uniformly bounded in $W^{2,\infty}$}.
In particular, they showed that
\[
  \overline{C^2(\M;\R^3)\cap W^{2,\infty}_\g(\M;\R^3) }^{W^{2,\infty}} \subsetneq W^{2,\infty}_\g(\M;\R^3).
\]

Our first main result describes the topology of the set of branch points for a general $W^{2,2}_\g$ immersion, and proves that whenever $W^{2,2}_\g$ contains an immersion with branch points, \eqref{eq:W22non_density} holds (without any additional assumptions):

\begin{theorem}\label{thm:main}
  Let $(\M,\g)$ be a two-dimensional Riemannian manifold without boundary having negative Gaussian curvature, and let $f:\M\to\R^3$ be a $W^{2,2}$ isometric immersion. Then
  \begin{enumerate}
    \item The index of the Gauss map of $f$ is well-defined at every point, and the set of branch points, i.e., points at which the index of the Gauss map does not equal $-1$, is closed and discrete.
    \item If $f$ admits a branch point, then $f$ cannot be approximated in the $W^{2,2}$ topology by $C^2$ isometric immersions, and thus \eqref{eq:W22non_density} holds.
  \end{enumerate}
\end{theorem}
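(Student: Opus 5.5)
The plan is to build everything on two facts about the Gauss map $\n\in W^{1,2}(\M;\Sph)$ of a $W^{2,2}$ isometric immersion. The first is the Gauss equation in the weak form $\det d\n = K_\g\,\VolG$, so the Jacobian of $\n$ is a.e.\ strictly negative. The second is that $\n$ is continuous, indeed VMO, with a modulus of continuity controlled by the local bending energy. I would obtain the continuity from the fact that $W^{1,2}$ maps with Jacobian of constant sign and $|d\n|^2\le C|\det d\n|$ locally are maps of bounded distortion. By Gauss we have $|\det d\n|=|K_\g|$. The quasiregularity bound $|d\n|^2 \le C|\det d\n|$ does not hold pointwise in general, so I would avoid needing it. Instead I would use Reshetnyak-type arguments only through the degree theory for VMO maps (Brezis--Nirenberg).

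\textbf{Part 1.} Fix $p$ and a small geodesic ball $B_r(p)$ on which $\n$ is continuous; continuity of $W^{1,2}$ Gauss maps of isometric immersions should be available from the paper's earlier sections. I would define the index as follows. For a.e.\ small $r$, the map $\n|_{\partial B_r}$ is $W^{1,2}$, hence continuous. I would show that $\n(\partial B_r)$ avoids $\n(p)$ for small $r$. The index is then the winding of $\n|_{\partial B_r}$ around $\n(p)$ in a chart of $\Sph$ near $\n(p)$.

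To see that this is well defined, I would use the degree formula: for $y$ outside $\n(\partial B_r)$, $\deg(\n,B_r,y)$ is the integer such that $\int_{B_r}\phi(\n)\det d\n=\deg\int\phi$ for forms $\phi$ supported near $y$. Since $\det d\n<0$ a.e., the degree is nonpositive. It is strictly negative wherever the preimage of the $y$-neighbourhood has positive measure. This monotonicity, combined with the fact that $\n$ is then a discrete, open (sense-reversing) map, is the key step and the main obstacle. I would aim to prove that $\n$ is light, meaning that preimages of points are totally disconnected. For this I would use that a nonconstant continuous $W^{1,2}$ map whose Jacobian has a sign and whose degree is additive is a generalized branched covering, in the spirit of Titus--Young and Vodop'yanov--Gol'dshtein. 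Once $\n$ is discrete and open, Chernavskii's theorem applies: the branch set, where $\n$ is not a local homeomorphism, is closed with topological dimension zero, and in dimension two it is discrete. At non-branch points the local homeomorphism is orientation-reversing, so the index is $-1$. Hence the points with index different from $-1$ lie in the discrete closed branch set.

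\textbf{Part 2.} Suppose $f_k\in C^2\cap\Wiso$ converge to $f$ in $W^{2,2}$. Then $\n_k\to\n$ in $W^{1,2}$. At a branch point $p$ of index $m\ne -1$, choose $r$ so that $B_r(p)$ contains no other branch point and $\n(\partial B_r)$ stays away from $\n(p)$. By Fubini, after passing to a subsequence, $\n_k|_{\partial B_r}\to\n|_{\partial B_r}$ in $W^{1,2}(\partial B_r)$, hence uniformly, for a.e.\ $r$. Therefore $\deg(\n_k,B_r,\n(p))=\deg(\n,B_r,\n(p))$ for large $k$.

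The right-hand side equals the sum of the local indices of the preimages of $\n(p)$ in $B_r$. This sum includes $m$ at $p$, and every other preimage is a regular point contributing $-1$. The left-hand side is likewise a sum of $-1$'s over the preimages in $B_r$, since $\n_k$ is a $C^1$ local diffeomorphism. Both sides therefore count preimages, so the parity or count alone gives no contradiction. The contradiction comes from the local count instead. Because $\n$ is open and $p$ is an isolated preimage, I would shrink $r$ so that $p$ is the only preimage of $\n(p)$ in $B_r$. Then $m=\deg(\n_k,B_r,\n(p))=-N_k$, where $N_k$ is the number of preimages; this is consistent only if $m\le -1$. So I need a further argument to rule out $m<-1$.

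Since $\n_k$ is a local homeomorphism, it is locally injective. Its degree on a small ball around a point of $B_r$ is $-1$, and by uniform convergence on circles the degrees of $\n$ must match around every small ball, including those centered at $p$. Choosing radii $r_k\to0$ with uniform convergence on $\partial B_{r_k}$ is the delicate point. I would try to handle it by a diagonal argument, using the $W^{1,2}$ convergence on a.e.\ circle together with equicontinuity coming from the uniform bound $|\det d\n_k|=|K_\g|$, which would give $m=-1$, a contradiction.
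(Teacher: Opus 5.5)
Both parts of your proposal have genuine gaps, and in each case the gap sits exactly at the step you flag as delicate. In Part 1, continuity of $\n$ is assumed rather than proved; VMO degree theory cannot supply it. More seriously, lightness and discreteness of $\n$ are left open. The principle you invoke, that a continuous $W^{1,2}$ map whose Jacobian has one sign is a branched covering, is not true in that generality: without integrability of the distortion, planar maps of finite distortion need not be light. You had the missing ingredient but discarded it. What is needed is not the pointwise bound $|d\n|^2\le C|\det d\n|$ but $|d\n|^2/|\det d\n|\in L^1$. This is automatic, because the Gauss equation gives $|\det d\n|=|K_\g|\ge |c|>0$ a.e.\ while $|d\n|^2\in L^1$.

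The paper proceeds exactly this way. The a.e.\ sign of the Jacobian makes $\n$ a map of finite distortion, hence continuous by \cite{GHP19}. In a chart where $\n(U)$ lies compactly in a hemisphere, the projected map $\hat\n$ has $\det d\hat\n$ positive and bounded below, hence integrable distortion. The Iwaniec--\v{S}ver\'ak factorization \cite{IS93} then gives $\hat\n=\vp\circ h^{-1}$ with $h$ a homeomorphism and $\vp$ holomorphic. Isolatedness of preimages (so the index is defined) and discreteness of the branch set (the $h$-image of the zeros of $\vp'$) follow from the identity theorem. Chernavskii's theorem would in any case only give a branch set of dimension zero, not a discrete one.

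In Part 2 you correctly see that matching degrees on one fixed circle only yields $m=-N_k$, but the proposed diagonal argument does not close the gap. The radius on which $\n_k$ is injective near $p$ can shrink arbitrarily fast in $k$, since there is no uniform $C^{1,1}$ bound (that bound is exactly what \cite{SV21} relied on). Degree stability on $\dB_{r_k}(p)$ requires $\sup_{\dB_{r_k}(p)}|\n_k-\n|<\operatorname{dist}(\n(p),\n(\dB_{r_k}(p)))$, a threshold that tends to $0$. Nothing relates the two rates, so the argument presupposes its conclusion. Moreover, Fubini only gives convergence on a.e.\ circle, and $|\det d\n_k|=|K_\g|$ by itself yields no equicontinuity.

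The paper supplies two ingredients you are missing. First, it proves local uniform convergence $\n_k\to\n$ on whole balls. The finite-distortion oscillation estimate $(\osc_{B(x,r)}\n_k)^2\le \frac{C}{\log(R/r)}\int_{B(x,R)}|d\n_k|^2$ holds with constants uniform in $k$, thanks to the equi-integrability of $|d\n_k|^2$ implied by $W^{1,2}$ convergence; this gives equicontinuity, and Arzel\`a--Ascoli finishes. Second, it argues at a \emph{fixed} radius $r$. Take $V=B_\e(\n(p))$ disjoint from $\n_k(\dB_r(p))$. Every $y\in V$ has exactly $|m|$ preimages in $B_r(p)$, all of index $-1$, so $\n_k$ restricted to $\n_k^{-1}(V)\cap B_r(p)$ is an $|m|$-sheeted covering of the simply connected $V$, and each component maps homeomorphically onto $V$. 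But uniform convergence on $B_\delta(p)$ and on $\overline{B_r(p)}\setminus B_\delta(p)$ puts all preimages of $\n(p)$ inside the connected set $B_\delta(p)\subset\n_k^{-1}(V)$, hence in a single component. This forces $|m|=1$.
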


We note that a somewhat similar notion of branch points exists for $W^{2,\infty}$ solutions of the hyperbolic Monge-Amp\`ere equation, $\det \nabla^2 u =-1$, in two dimensions \cite[Chapter 2]{Kir03}; these branch points are also discrete \cite[Theorem~2.20]{Kir03}.
However, the techniques of our result and Theorem~\ref{thm:main} differ significantly.

Note also that since our analysis is local, we do not have to assume anything about the completeness of $\M$,
which can be, for example, the interior of a compact manifold with boundary.
Consequently, the result also holds if one replaces $W^{2,2}$ with $W^{2,2}_\text{loc}$.

For the proof of the first part of the theorem, we apply a result of \cite{GHP19} concerning manifold-valued maps of finite distortion. We prove that $W^{2,2}$ isometric immersions of negatively-curved surfaces  have continuous Gauss maps.
This allows us to view the Gauss map locally as a map into $\R^2$, and apply a structure theorem for maps of integrable dilatation \cite[Thm.~1]{IS93}, which reduces the question to the discreteness of nonconstant analytic functions.

The proof of the second part is based, like \cite[Theorem 3.22]{SV21}, on a stability argument for the index. The main challenge when lowering the regularity from $W^{2,\infty}$ (i.e., $C^{1,1}$), as in \cite{SV21}, to $W^{2,2}$, is as follows:
A branch point is a point $p\in \M$ whose index does not equal $-1$, meaning that $\deg(\n,U,\n(p))\ne -1$, for some neighborhood $U$ of $p$ depending on the Gauss map $\n$.
Thus, we cannot simply use the continuity of the degree $\deg(\n_n,U,q)\to \deg(\n,U,q)$ with respect to uniform convergence to obtain the continuity of the index.
In \cite{SV21}, this is circumvented by showing that the uniform $W^{2,\infty}$ bounds imply that $U$ can be taken uniformly for that whole sequence.
In our case such a bound is not available, so a more delicate argument is needed to prove the stability of the index (as a side point, our argument removes the need to use the degree of VMO maps, as was done in \cite{SV21}).

\begin{remark}
  As mentioned above, the starting point of the proof is that a $\Wiso$ map of a negatively curved metric has a continuous normal (see \propref{prop:unif conv} below).
  In particular, this implies (see \cite[Lemmas~2.11, 2.13]{HV18}) that the image of the map is a $C^1$ surface, in the following sense:
  Let $f\in\Wiso$, then every point in $\M$ has a neighborhood $V$ such that $f(V)$ is the graph of a $C^1\cap W^{2,2}$ function $h:\W\to\R$, with $\W\subset\R^2$.
  We do not know whether $f$ itself is $C^1$ (like in the cases of flat or elliptic metrics).
\end{remark}

Our second main result shows that $W^{2,2}$ isometric immersions exhibiting branch points exist locally for every negatively-curved manifold:

%%%%%%%%%%%%%
\begin{theorem}
  \label{thm:generic}
  Let $(\M,\g)$ be a two-dimensional Riemannian manifold without boundary having negative Gaussian curvature.
  Then every point $p\in\M$ has an open neighborhood $U_p\subset\M$ and an immersion $f_p\in W^{2,\infty}_\g(U_p,\R^3)$ having a branch point at $p$ of arbitrary order.
\end{theorem}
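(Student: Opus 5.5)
We will construct the immersion by gluing finitely many local smooth isometric immersions along asymptotic curves through $p$. The construction adapts the examples of \cite{GV11,SV21} for the hyperbolic plane to a general negatively curved metric.

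First we fix a smooth local isometric immersion $f_0$ of a neighborhood of $p$. Such an immersion exists for $K_\g<0$ by the classical local theory: one solves the Gauss--Codazzi system, which is hyperbolic in this case. We then choose Chebyshev-type coordinates $(u,v)$ in which the asymptotic lines of $f_0$ are coordinate lines. In these coordinates the Gauss--Codazzi equations reduce to a sine-Gordon-type hyperbolic system for the angle $\omega$ between asymptotic directions, with lower-order terms coming from $K_\g$.

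The key flexibility is a Goursat (characteristic) initial value problem. Given the metric $\g$ near $p$ and two transversal curves $\gamma_1,\gamma_2$ through $p$ that are asymptotic for the sought immersion, the immersion is locally determined in the characteristic quadrant between them by data prescribed along $\gamma_1,\gamma_2$. These data are the Darboux frame along the curves, or equivalently $\omega$ on the two characteristics. The data can be prescribed essentially freely (smooth and compatible at $p$), and the system then has a unique smooth solution in a small characteristic quadrant.

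To build a branch point of order $k$, we choose $2k+2$ curves $\gamma_0,\dots,\gamma_{2k+1}$ emanating from $p$ in cyclic order. They divide a small disc around $p$ into $2k+2$ sectors $S_j$. Each $S_j$ receives a smooth isometric immersion $f_j$ solving the Goursat problem with $\gamma_j,\gamma_{j+1}$ as asymptotic curves. Adjacent sectors must agree to first order along the common curve $\gamma_{j+1}$: same position, same tangent map, and same normal. Because $\gamma_{j+1}$ is asymptotic for both pieces, the normal curvature along it vanishes. The frame along an asymptotic curve is then determined by its geodesic curvature, which is intrinsic, and by its geodesic torsion, which equals $\pm\sqrt{-K_\g}$ by Beltrami--Enneper. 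Hence, by uniqueness for the Frenet--Darboux ODE, both pieces induce the same Darboux frame along $\gamma_{j+1}$, provided the torsion signs agree. The first-order data therefore match automatically.

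Thus the glued map $f_p$ is $C^1$, and piecewise smooth on sectors, hence $W^{2,\infty}$ and isometric. Its second fundamental form jumps across the $\gamma_j$, which is allowed. The intrinsic geometry fixes the $\gamma_j$ arbitrarily as $\g$-curves, but we have to ensure each sector's Goursat solution lives on the whole sector; this is arranged by taking sectors of angle less than $\pi$ and shrinking the neighborhood $U_p$.

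To compute the index, we note that on each sector the Gauss map is orientation-reversing, so it sweeps a region whose boundary consists of the images of the asymptotic curves. Going once around $p$, the asymptotic rays alternate as in a saddle. The image of a small loop under $\n$ winds around $\n(p)$ once for every pair of consecutive sectors, in the reversed direction, so the index is $-k$. This follows \cite[\S7]{GV11} and the computation in \cite{SV21}, carried out in the tangent plane at $p$ to first order. For $k=1$ this gives the ordinary index $-1$, and for $k\ge2$ a branch point of order $1-(-k)=k+1$. Adjusting the count, any prescribed order is attained.

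\textbf{Main obstacle.} The hardest step is the sign and compatibility bookkeeping. The geodesic torsion of each asymptotic family has a fixed sign, and a curve may be an asymptotic curve of the first family for one sector and of the second family for its neighbor. The gluing must be arranged so the two sides induce the same Darboux frame (torsion $+\sqrt{-K}$ on both), with the sector immersions chosen consistently around the whole cycle. We would first try to handle this by letting the family label alternate around $p$ and choosing the Goursat data on each $\gamma_j$ once, using it for both adjacent sectors. The local existence of the characteristic problem on sectors of a general smooth metric is then standard hyperbolic theory.
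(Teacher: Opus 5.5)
Your construction is essentially the paper's: an even number of sectors of angle $<\pi$, a characteristic problem on each sector making both bounding curves asymptotic, alternating the torsion sign, and gluing via Beltrami--Enneper. The alternating sign is the paper's inward/outward choice, realized by reflecting across the tangent plane at $f(p)$. The difference is that you allow arbitrary curves where the paper uses geodesics, whose images are straight rays. Your Darboux-frame ODE argument for the gluing is a valid generalization. You should, however, state that all sector immersions are first normalized by rigid motions to share $f(p)$ and $df_p$. Two smaller points: the preliminary $f_0$ with Chebyshev coordinates is never used, since its asymptotic lines give only four rays. Also, for a prescribed $\g$, the Darboux frame (or $\omega$) along an asymptotic curve is not free data. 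The Goursat problem just prescribes the slope of each bounding curve as one Riemann invariant, as in Proposition~\ref{prop:wedge immersion}.

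The genuine gap is the index computation, which is what makes $p$ a branch point at all. Your heuristic ``$\n$ winds once for every pair of consecutive sectors'' gives $-(k+1)$ for $2k+2$ sectors, not the $-k$ you assert. The missing $+1$ comes from the sector angles summing to $2\pi$, which your description never uses, and \cite{GV11,SV21} cover only constant curvature. What is needed is a per-sector count as in Lemma~\ref{lem:wedge rotation}.

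One way to do this: let $d\n^{(j)}_p$ be the differential at $p$ of the Gauss map of the $j$-th piece, identify $T_{\n(p)}\Sph$ with $df_p(T_p\M)$, and let $J$ be rotation by $\pi/2$ there. Define $\Lambda$ to equal $d\n^{(j)}_p$ on the cone spanned by $v_j=\dot\gamma_j(0)$ and $v_{j+1}$.
\begin{itemize}
\item $\Lambda$ is continuous across the rays, because $\n$ is continuous along each $\gamma_j$.
\item Since $\II(v,v)=0$ on asymptotic directions, $d\n^{(j)}_p v_j=c\,J df_p(v_j)$ and $d\n^{(j)}_p v_{j+1}=c'\,J df_p(v_{j+1})$.
\item $c\,c'<0$, because equal signs would make the piece orientation-preserving.
\end{itemize}
Hence each orientation-reversing piece maps the sector of angle $\alpha_j$ onto a sector of angle $\pi-\alpha_j$, traversed clockwise. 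So $\Lambda$ on a small circle has winding number $-\frac{1}{2\pi}\sum_j(\pi-\alpha_j)=1-(k+1)=-k$. Since each piece of $\n$ is $C^1$ up to the boundary, $\n-\n(p)-\Lambda=o(|x|)$, while $|\Lambda(x)|\ge c|x|$. The degree therefore does not change, and $\ind_p(\n)=-k$. Without an argument of this kind, your proof does not establish a branch point of order $k+1$.
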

%%%%%%%%%%%%%

This result generalizes similar constructions of isometric immersions with branch points for hyperbolic surfaces of constant curvature \cite{SV21} and for immersions with prescribed negative curvature \cite{PV26}.
The proof consists of two parts: (i) Proving the existence of $C^2$ isometric immersions of ``wedges" whose boundaries are geodesics which map to straight lines, which boils down to solving a characteristic hyperbolic system of semilinear PDEs. (ii) Gluing the wedges together and showing that the resulting immersion is of the correct regularity and contains a branch point.

\paragraph{Open questions and future directions.}
This paper establishes the index of the Gauss map as an important invariant for studying the space of $W^{2,2}$ isometric immersions of hyperbolic metrics.
One natural question that arises is: Are branch points the only obstructions for density, that is, does the closure of $C^2(\M;\R^3)\cap \Wiso $ consist exactly of the $W^{2,2}_\g$ maps having no branch points?
Furthermore, from an elasticity viewpoint, does the energy gap conjecture \eqref{eq:energy_gap} hold? Currently, this is still widely open even in the constant curvature case.

\paragraph{Structure of the paper.}
In Section~\ref{sec:setting} we set the notation and basic definitions.
In Section~\ref{sec:Gauss_map} we analyze the convergence of Gauss maps of $W^{2,2}$-convergent isometric immersions, and in particular show that they converge locally-uniformly.
In Section~\ref{sec:local_degree} we recall the index of a map, analyze its stability and conclude the proof of Theorem~\ref{thm:main}.
In \secref{sec:construction} we prove \thmref{thm:generic} whereby every point in a hyperbolic surface has a neighborhood that can be immersed in $\R^3$ with branch points.

\paragraph{Acknowledgements.}
CM thanks Robert Jerrard for insightful discussions on the subject, and in particular for pointing out the related result in \cite{Kir03}.
We are grateful to the user Volk on Stack Exchange
for suggesting an approach for the proof of \propref{prop:wedge immersion}.
RK was partially funded by ISF grant 560/22.
CM was partially funded by ISF grant 2304/24 and BSF grant 2022076.

%%%%%%%%%%%%%%%%%%%%%%%%%%%%%%%%
\section{Setting}\label{sec:setting}
Throughout this paper, we assume that $(\M,\g)$ is a smooth two-dimensional Riemannian manifold without boundary, which does not have to be complete.
We assume that $\g$ is a hyperbolic metric, i.e., with negative Gaussian curvature $K_\g\le c < 0$.
We denote by $\Wiso$ the space of finite-bending isometric immersions:
\[
  \Wiso := \BRK{f\in W^{2,2}(\M,\R^3) ~:~ f^*\euc = \g \,\,\text{almost everywhere}\,},
\]
where the isometry condition means that
\beq\label{eq:isometric_condition}
\g_p(u,v) =
\ip{df_p(u),df_p(v)}_\euc
\qquad \forall u,v\in T_p\M, \quad \text{for a.e. } p\in \M,
\eeq
and $\ip{\cdot,\cdot}_\euc$ denotes the inner-product in $\R^3$.
The norm of tensor field over $T\M$ and $T^*\M$ is denoted correspondingly by $|\cdot|_\g$. The norm of such tensor fields taking values in $\R^3$ is denoted by $|\cdot|_{\g,\euc}$. We will often omit the subscripts where there is no source of confusion.
The $L^2$ norm of functions (e.g., $f:\M\to \R^3$) or tensor fields (e.g., $df:\M\to T^*\M\otimes \R^3$) is always taken with respect to these pointwise norms, and the volume form $\VolG$ induced by $\g$.

Let $f\in \Wiso$ and let $\n:\M\to\Sph$ be the corresponding Gauss map, which is defined almost everywhere.  The Gauss map can be defined in a coordinate-free manner,
\beq
\n = \frac{*_\euc (df\wedge df)}{\VolG},
\label{eq:N}
\eeq
where $*_\euc : \Lambda^2 \R^3\to \Lambda^1\R^3\simeq \R^3$ is the Euclidean Hodge-dual operator.
If $\{x^1,x^2\}$ is a local coordinate system for $\M$, then within that chart
\[
  \n = \frac{\partial_1 f \times \partial_2 f}{\det(\g_{ij})},
\]
where $\g_{ij} = \g(\partial_i,\partial_j)$.

Denote by $\nabg$ the Levi-Civita connections on $T\M$, $T^*\M$ and their tensor products. The covariant Hessian of the configuration satisfies the following identity:
\beq
\nabg df (X,Y) = -\ip{df(X),d\n(Y)}_\euc\, \n.
\label{eq:Hess_f}
\eeq
Let $\{e_i\}$ be a local orthonormal frame field on $\M$. Since the range of $d\n$ is perpendicular to $\n$, and since $\{df(e_i)\}$ is an orthonormal frame field for $f(\M)$, it follows that
\[
  d\n(e_i) = \sum_j \ip{d\n(e_i),df(e_j)}_\euc\,df(e_j),
\]
hence
\beq
|d\n|_{\g,\euc}^2 = \sum_i |d\n(e_i)|^2_\euc = \sum_{i,j} \ip{d\n(e_i),df(e_j)}_\euc^2 = |\nabg df|^2_{\g,\euc}.
\label{eq:dN=Hess_f}
\eeq
It follows at once that $\Wiso$ consists of maps $f\in W^{2,1}(M;\R^3)$ satisfying \eqref{eq:isometric_condition} and having $\n\in W^{1,2}(\M;\Sph)$.
This justifies the name ``finite-bending isometries'' for $\Wiso$.

%%%%%%%%%%%%%%%%%%%%%%%%%%%%%%%%
\section{Gauss map analysis}
\label{sec:Gauss_map}
The main result of this section is the local-uniform convergence of normals of converging immersions in $\Wiso$ (Proposition~\ref{prop:unif conv}).
Before proving it, we start with a well-known proposition, the proof of which, however, is not easy to locate in the literature:

%%%%%%%%%%
\begin{proposition}[$W^{1,2}$ convergence of the Gauss Map]
  \label{prop:W22 continuity}
  Let $f_n\subset \Wiso$ and $f\in \Wiso$, such that $f_n\to f$ in $W^{2,2}$. Denote by $\n_n,\n$ the corresponding Gauss maps. Then $\n_n\to\n$ in $W^{1,2}(\M;\Sph)$. In particular, the bending energy $\Eb$ as defined in \eqref{eq:bending_energy} is continuous in $\Wiso$.
\end{proposition}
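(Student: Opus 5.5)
The plan is to prove $L^2$ convergence of $\n_n$ and $L^2$ convergence of $d\n_n$ separately, reducing the second to the Hessian identity \eqref{eq:Hess_f}--\eqref{eq:dN=Hess_f}. The argument is local: cover $\M$ by (countably many, or on a relatively compact exhausting family) coordinate charts with an orthonormal frame $\{e_i\}$. Norms and volume are then comparable to Euclidean ones on compact pieces. Convergence on all of $\M$ follows by a standard argument, using the global identity $|d\n|=|\nabg df|$ pointwise together with tightness from the $L^2$ convergence of $\nabg df_n$.

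\textbf{Step 1: $\n_n\to\n$ in $L^2$.} By \eqref{eq:N}, $\n_n$ is the bilinear expression $*_\euc(df_n\wedge df_n)/\VolG$ in $df_n$. Since $df_n\to df$ in $L^2$ and $|df_n|_{\g,\euc}=\sqrt2$ a.e. by isometry, we get $df_n\otimes df_n\to df\otimes df$ in $L^1$. Hence $\n_n\to\n$ in $L^1$, and since $|\n_n|=1$, also in every $L^p$ with $p<\infty$. Passing to subsequences, we may assume a.e. convergence of $df_n$ and $\n_n$ (a subsequence argument at the end gives the full sequence).

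\textbf{Step 2: $d\n_n\to d\n$ in $L^2$.} Using \eqref{eq:Hess_f}, pair with $df(e_j)$ to express the components:
\[
\ip{d\n_n(e_i),df_n(e_j)}_\euc = -\ip{\nabg df_n(e_j,e_i),\n_n}_\euc,
\]
and write $d\n_n(e_i)=\sum_j \ip{d\n_n(e_i),df_n(e_j)}\,df_n(e_j)$. Thus
\[
d\n_n = -\sum_{i,j}\ip{\nabg df_n(e_j,e_i),\n_n}\, df_n(e_j)\otimes e^i .
\]
This is a product of $\nabg df_n$, which converges strongly in $L^2$, with the factors $\n_n$ and $df_n$. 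These factors are uniformly bounded (by $1$ and $\sqrt2$) and converge a.e. For products $a_nb_n$ with $a_n\to a$ in $L^2$ and $b_n$ bounded with $b_n\to b$ a.e., we split $a_nb_n-ab=(a_n-a)b_n+a(b_n-b)$. The first term is controlled by $\sup|b_n|\,\|a_n-a\|_{L^2}$, and the second tends to zero by dominated convergence with dominating function $4|a|^2$. This gives $d\n_n\to d\n$ in $L^2$ along the subsequence, and therefore, by uniqueness of the limit, along the full sequence. Continuity of $\Eb$ then follows immediately from $\|d\n_n\|_{L^2}\to\|d\n\|_{L^2}$.

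\textbf{Main obstacle.} The key step is justifying \eqref{eq:Hess_f} a.e. for $W^{2,2}$ maps. This identity is the statement that the tangential part of $\nabg df$ vanishes, i.e. $\ip{\nabg df(X,Y),df(Z)}=0$. I would derive it by differentiating the isometry constraint $\ip{df(X),df(Z)}=\g(X,Z)$, which is valid in $W^{1,1}$ by the product rule since $df\in W^{1,2}\cap L^\infty$. The standard Christoffel-type symmetrization, using the symmetry of $\nabg df$, then yields the vanishing. The normal part is obtained by differentiating $\ip{\n,df(Y)}=0$, which is again a legitimate product rule for $\n\in W^{1,2}\cap L^\infty$, once $\n\in W^{1,2}$ is known from the same computation. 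The paper already asserts these identities in \secref{sec:setting}, so they can be taken as given, and the remaining work is Steps 1 and 2.
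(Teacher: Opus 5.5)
Your proof is correct, but Step 2 takes a different route from the paper.

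\textbf{The paper's route.} It never writes $d\n$ as a product. It observes from \eqref{eq:N}, via H\"older, that $\n_n\to\n$ in $W^{1,p}$ for all $p<2$, so $d\n_n\to d\n$ in measure. It then uses only the scalar identity \eqref{eq:dN=Hess_f} to get the domination $|d\n_n-d\n|^2\le 2|\nabg df_n|^2+2|d\n|^2$, whose right-hand side converges in $L^1$. Pratt's lemma (generalized dominated convergence) then gives $d\n_n\to d\n$ in $L^2$.

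\textbf{Your route.} You use the full tensorial identity \eqref{eq:Hess_f} to express $d\n_n$ as $\nabg df_n$ contracted with the bounded, a.e.\ convergent factors $\n_n$ and $df_n$. You then apply, along subsequences, the elementary lemma that a strongly $L^2$-convergent sequence times a uniformly bounded a.e.-convergent sequence converges in $L^2$.

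\textbf{Comparison.} The paper's version is frame-free, needs no subsequence extraction, and uses only the pointwise norm identity. Yours needs only the ordinary dominated convergence theorem, at the price of a local frame and a local-to-global step. You handle that step correctly via tightness. It could be avoided altogether by using the global Weingarten formula $d\n=-df\circ A$, where $\g(AY,X)=\ip{\nabg df(X,Y),\n}$.

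\textbf{A further remark.} Your product lemma applies directly to the derivative of \eqref{eq:N} itself. Since $\VolG$ is parallel, $d\n(X)=*_\euc(\nabg df(X,\cdot)\wedge df+df\wedge\nabg df(X,\cdot))/\VolG$ is bilinear in $(\nabg df,df)$, with $|df|=\sqrt2$ bounded. So the $W^{1,2}$ convergence $\n_n\to\n$ does not actually hinge on \eqref{eq:Hess_f}, the step you flagged as the main obstacle.
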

%%%%%%%%%%

%%%%%%%%%%
\begin{proof}
  Since $df_n \to df$ in $W^{1,2}(\M;\R^3)$ it follows from \eqref{eq:N} that $\n_n\to\n$ in $W^{1,p}(\M;\Sph)$ for every $p<2$; in particular, $\n_n\to\n$ in $L^2$ and $d\n_n\to d\n$ in measure.
  By \eqref{eq:dN=Hess_f},
  \[
    |d\n_n-d\n|^2\le2|d\n_n|^2+2|d\n|^2=2|\nabg df_n|^2+2|d\n|^2.
  \]
  Since $\|\nabg df_n\|_{L^2}\to\|\nabg df\|_{L^2}$, it follows by Pratt's lemma (see \cite[Thm.~A.10]{Rin18}) that $d\n_n\to d\n$ in $L^2$, thus $\n_n\to \n$ in $W^{1,2}$.
\end{proof}
%%%%%%%%%%

%%%%%%%%%%
\begin{proposition}[Uniform convergence of the Gauss map of hyperbolic immersions]
  \label{prop:unif conv}
  Let $f_n$, $\n_n$, $f$, $\n$ be defined as above.
  Then, $\n_n$ and  $\n$ are continuous, and $\n_n\to\n$ uniformly on every compact set $Z\subset\M$.
\end{proposition}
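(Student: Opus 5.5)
We will prove continuity and uniform convergence together, via a uniform modulus of continuity for the Gauss maps on small balls. The key structural fact is the Gauss equation in a weak form: for $f\in\Wiso$ the Jacobian determinant of $\n$ (relative to $\VolG$ and the area form of $\Sph$) equals $K_\g$ a.e., i.e.
\[
\n^*\omega_{\Sph} = K_\g\, \VolG .
\]
This holds because the identity is pointwise algebraic in $d\n$ and $\nabg df$, and it persists under the approximation and weak-limit arguments available for $W^{2,2}$ isometries. Since $K_\g\le c<0$, $\n$ is orientation-reversing with $|\det d\n| = |K_\g| \ge |c|$, while $|d\n|^2\in L^1$. Hence $\n$ is a map of finite distortion in the sense of \cite{GHP19}:
\[
|d\n|^2 \le \frac{|d\n|^2}{|K_\g|}\,|J\n|, \qquad K(x):= \frac{|d\n|^2}{|K_\g|}\in L^1_{\mathrm{loc}}.
\]
The continuity result for manifold-valued $W^{1,2}$ maps with integrable distortion from \cite{GHP19} then gives that $\n$ and each $\n_n$ are continuous. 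Alternatively, one can argue directly: since $|\det d\n|\ge|c|>0$ and $\n\in W^{1,2}$, the map is monotone in the Lebesgue sense, and monotone $W^{1,2}$ maps in dimension two are continuous.

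For uniform convergence we make the continuity quantitative. Fix a compact $Z$ and work in coordinate discs $B_r(x)$. We use the oscillation estimate for monotone $W^{1,2}$ planar maps (Lebesgue/Courant–Lebesgue type):
\[
\osc_{B_r(x)} \n \;\le\; C\,\Big(\log\frac{R}{r}\Big)^{-1/2}\, \|d\n\|_{L^2(B_R(x))},
\]
valid for $r<R$. The proof runs as follows:
\begin{itemize}
\item For a.e.\ $\rho\in(r,R)$, the trace of $\n$ on $\partial B_\rho$ is absolutely continuous, with oscillation bounded by $\int_{\partial B_\rho}|d\n|$.
\item Monotonicity (a consequence of the degree/orientation structure from the Jacobian sign, locally in a chart of $\Sph$) gives $\osc_{B_\rho}\n=\osc_{\partial B_\rho}\n$.
\item Averaging in $\rho$ with weight $d\rho/\rho$ and applying Cauchy–Schwarz yields the bound.
\end{itemize}
Monotonicity needs care: $\n$ maps into $\Sph$, so we first observe that $\n(B_\rho)$ lies in a hemisphere for small $\rho$. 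This follows from the bound itself by a continuity/bootstrap argument, or from the continuity already established.

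By Proposition~\ref{prop:W22 continuity}, $d\n_n\to d\n$ in $L^2$, so $\{|d\n_n|^2\}$ is uniformly integrable. Hence $\sup_n\|d\n_n\|_{L^2(B_R(x))}\to0$ as $R\to0$, uniformly in $x\in Z$. Combined with the oscillation estimate, this makes the family $\{\n_n\}$ equicontinuous on $Z$. Since $\n_n\to\n$ in $L^2$, Arzelà–Ascoli gives uniform convergence on $Z$.

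The main obstacle is justifying monotonicity (the maximum principle $\osc_{B_\rho}=\osc_{\partial B_\rho}$) for $\n_n$ with constants uniform in $n$. I would derive it from the a.e.\ sign of the Jacobian by a degree argument: if $\n(B_\rho)$ reached a value $q$ outside the image of $\partial B_\rho$, then the degree would satisfy $\deg(\n,B_\rho,q)\le0$, with strict negativity on a positive-measure set by the area formula. This would contradict the fact that the degree vanishes on the unbounded component of the complement of $\n(\partial B_\rho)$. This step relies on the continuity established first, via \cite{GHP19}, so the two parts must be done in this order.
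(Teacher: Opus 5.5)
\textbf{There is a genuine gap.} Your skeleton matches the paper's: a.e.\ Gauss equation, finite distortion and continuity via \cite{GHP19}, a logarithmic oscillation estimate, equi-integrability of $|d\n_n|^2$ from Proposition~\ref{prop:W22 continuity}, then Arzel\`a--Ascoli and identification of the limit. The difference is that you re-derive the oscillation estimate yourself via Courant--Lebesgue and monotonicity. There the crucial point is missing: the estimate must hold for \emph{every} $\n_n$ on balls $B_R(x)$, $x\in Z$, with $R$ independent of $n$.

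None of your three justifications gives this.
\begin{itemize}
\item Continuity of each $\n_n$ only yields an $n$-dependent radius on which $\n_n(B_\rho(x))$ lies in a hemisphere, and this radius may shrink as $n\to\infty$.
\item The ``bootstrap from the bound itself'' does not close. The bound presupposes monotonicity on the ball, hence the hemisphere property there. But to bound $\osc_{B_r}$, Courant--Lebesgue needs a good circle at some radius $\rho>r$. So near the edge of the region where you already have the hemisphere property, you would need monotonicity beyond it.
\item The degree argument appeals to ``the unbounded component'' of $\Sph\setminus\n(\partial B_\rho)$, which does not exist on the sphere. A disc can wrap around $\Sph$ with $\partial B_\rho$ sent into a tiny cap. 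The degree on the large complementary component is then nonzero, and $\osc_{B_\rho}\n\neq\osc_{\partial B_\rho}\n$.
\end{itemize}
Relatedly, you use the uniform smallness of $\|d\n_n\|_{L^2(B_R(x))}$ only to make the right-hand side small. A uniform $L^2$ bound already does that; equi-integrability is really needed for the radius.

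\textbf{The missing idea is that small energy excludes wrapping, uniformly in $n$.} Take $\n_n$ continuous in $W^{1,2}$ with $J\n_n<0$ a.e. On each component $W$ of $\Sph\setminus\n_n(\partial B_\rho)$, the degree is a constant $k_W\le 0$ with $k_W\,|W|=\int_{\n_n^{-1}(W)\cap B_\rho}J\n_n$. Hence $|k_W|\,|W|\le\frac12\int_{B_\rho}|d\n_n|^2$.

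Now suppose $\n_n(\partial B_\rho)$ lies in a small cap (Courant--Lebesgue provides such a $\rho$) and $\int_{B_\rho}|d\n_n|^2$ is below a universal threshold (e.g.\ $4\pi$). Then the degree on the large component $W_0$ vanishes. So $\n_n^{-1}(W_0)\cap B_\rho$ is an open set on which $J\n_n<0$ a.e.\ yet $\int J\n_n=0$. It is therefore empty, and $\n_n(B_\rho)$ stays in the cap. Equi-integrability makes the threshold hold on all $B_R(x)$, $x\in Z$, with $R$ independent of $n$.

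This is exactly the paper's argument. The admissible radius in \cite{GHP19} depends on the map only through the radius $R_2$ at which $\int_{B(x,r)}|d\n|^2\le C_{\Sph}$, and equi-integrability makes that uniform in $n$.

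Separately, your justification of the a.e.\ Gauss equation is wrong as stated. The identity $\det\II=K_\g\det\g$ is a differential compatibility condition, not a pointwise identity in $\nabg df$. Approximation by smooth isometries is precisely what is unavailable here. The paper cites \cite[Proposition~5.4]{MM25} for this step.
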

%%%%%%%%%%

%%%%%%%%%%
\begin{proof}
  By \cite[Proposition 5.4]{MM25}, the Gauss equation
  \beq\label{eq:Gauss_eq}
  \det(d\n \circ df^{-1}) = K_\g
  \eeq
  holds almost everywhere. Equivalently, $d\n$ viewed as bundle map $T\M\to T\Sph$ has a negative determinant (that is, its coordinate representation with respected to oriented coordinates in $\M$ and $\Sph$ has a negative determinant); by inverting the orientation of the sphere, $d\n$ has positive determinant almost-everywhere.
  In the terminology of \cite{GHP19}, $\n$ has
  \emph{finite distortion}.
  It follows from \cite[Theorem 4 and Corollary 5]{GHP19} that $\n$ is continuous.
  Moreover, let $Z\subset\M$ be compact; there exists an $R_0>0$ depending on $Z$ and $\n$,
  such that for every $x\in Z$ and every $0<r<R<R_0$:
  \beq
  \brk{\osc_{B(x,r)}\n}^2\le\frac{C}{\log(R/r)}\int_{B(x,R)}|d\n|^2\,\VolG=:I_{R,r},
  \label{eq:GHP}
  \eeq
  where $B(x,R)$ denotes a geodesic ball in $\M$, and for a set $A$,
  \[
    \osc_{A}\n = \diam(\n(A)).
  \]
  Similarly,  $\n_n$ is continuous for every $n\in\bbN$, and satisfies the same oscillation inequality.

  The radius $R_0$ is limited from above by properties of $\M$ and $Z$ and, using the notation of \cite{GHP19}, by the following radius
  \[
    R_2 = \sup\BRK{r > 0 ~\left|~ \int_{B(x,r)} |d\n|^2\,\VolG\right.  \le C_{\Sph},\,\,\, \forall x\in Z},
  \]
  where $C_{\Sph}>0$ is a number determined only by the geometry of the compact codomain (its radius of injectivity and its volume growth function). Since $L^2$-convergence implies equi-integrability, it follows that $R_2$, and hence $R_0$, can be chosen such that \eqref{eq:GHP} holds with $\n$ replaced by $\n_n$ for every $n$; denote the corresponding right-hand side by $I_{R,r}^n$.

  Let $R = R_0/2$ and let $\e>0$. Invoking again equi-integrability, there exists an $r\in(0,R)$ such that $I_{R,r}^n<\e^2$, namely,
  \[
    \osc_{B(x,r)}\n_n <\e,
  \]
  implying that $\n_n$ is equi-continuous in $Z$. Since the codomain is compact, it follows from the Arzela-Ascoli theorem that $\n_n$ has a subsequence that converges uniformly in $Z$. By \propref{prop:W22 continuity} and the uniqueness of the limit, this subsequence converges to $\n$. Since every subsequence of $\n_n$ has a further subsequence converging uniformly to $\n$, it follows that $\n_n\to\n$ uniformly in $Z$.
\end{proof}

%%%%%%%%%%%%%%%%%%%%%%%%%%%%%%%%%%%%%%%%%%
\section{Index analysis}
\label{sec:local_degree}

In this section we analyze the index of a map, and prove both parts of \thmref{thm:main}.
We start by recalling the definition of the index of a map; see  \cite[Definition~2.8]{FG95} for the index of a map between Euclidean domains. The only modification when considering maps between manifolds is using the definition of the degree of a map between manifolds (e.g., \cite[Definition~1.25]{FG95}).

\begin{definition}
  Let $M,N$ be two oriented manifolds of equal dimension.
  Let $u\in C(M;N)$ and let $p\in M$.
  Assume that there exists an open neighborhood $U$ of $p$, such that
  \[
    u^{-1}(\{u(p)\}) \cap \overline{U} = \{p\},
  \]
  that is, $p$ is an isolated point in the pre-image of $u(p)$.
  Then, we define the index of $u$ at $p$ by
  \[
    \ind_p(u) = \deg(u,U,u(p)),
  \]
  which is well-defined as it does not depend on the choice of $U$.
\end{definition}

\begin{proof1}{of Theorem~\ref{thm:main}, part 1}
  Let $f\in\Wiso$, let $\n$ be its Gauss map, and let $p\in\M$. By the continuity of $\n$, there exists an orientation preserving coordinate chart $\psi:U\to\R^2$ around $p$, such that $\n(U)$ is compactly contained in a single hemisphere.
  Denote by $P:\n(U)\to\R^2$ the orthogonal projection onto the plane defining the hemisphere (we identify this plane with $\R^2$ such that $P$ reverses orientation.
  Denote $\hat{\n}=P\circ\n\circ\psi^{-1}:\psi(U)\to P(\n(U))$, which is a map between bounded subsets of $\R^2$.
  Then, $\hat{\n}\in W^{1,2}(\psi(U);\R^2)$, $\det(d\hat{\n})>0$ a.e. (this follows from choosing $\psi$ to be orientation-preserving and $P$ to be orientation-reversing), and $|d\hat{\n}|^2/\det d\hat{\n}\in L^1(\W)$ because $d\hat{\n}\in L^2$ and $\det \hat{\n}$ is bounded away from zero, which follows from the Gauss equation \eqref{eq:Gauss_eq}.

  By the structure theorem \cite[Thm.~1]{IS93} there exists an orientation preserving homeomorphism $h\in W^{1,2}(\W,\psi(U))$ for some domain $\W\subset\R^2$, and an analytic function $\vp\in W^{1,2}(\W,\R^2)$, such that
  \[
    \hat{\n}=\vp\circ h^{-1}.
  \]

  Now, $h^{-1}(\psi(p))=:q$ has an open neighborhood $U_1$ such that $\vp^{-1}(\vp(q))\cap\overline{U_1}={q}$, otherwise $\vp$ would be constant by the identity theorem for analytic functions. Thus, since $h$ is a homeomorphism, $h(U_1)$ is an appropriate neighborhood for $\ind_{\psi(p)}\hat{\n}$ to be well-defined.
  Similarly, because $P,\psi^{-1}$ are homeomorphisms, $\psi^{-1}(h(U_1))$ is an appropriate neighborhood for $\ind_p\n$ to be well-defined.

  We next prove that the set of branch points in $U$ is discrete and closed in $U$.
  First, the set $A$ of points in $\W$ at which $\vp'=0$ is discrete and closed in $\W$, otherwise we would have $\vp'\equiv0$ again by the identity theorem.
  The set $B$ of points in $\W$ at which $\ind\vp\ne1$ is contained in $A$, because if $\vp'(z)\ne0$ then $\vp$ is an orientation preserving local homeomorphism in a neighborhood of $z$, so $\ind_z\vp=1$; see e.g., the proof of \cite[Thm. 3.35]{FG95}. In fact, $A=B$, for example by \cite[Thm. 2.20]{FG95} and the argument principle.
  Thus $B$ is discrete and closed in $\W$. Because $h$ is an orientation preserving homeomorphism, its index is defined everywhere and equals $1$, so by \cite[Thm. 2.10]{FG95} $h(B)$ is the set of points in $\psi(U)$ for which $\ind\hat{\n}\ne1$. Because $h$ is a homeomorphism, $h(B)$ is discrete and closed in $\psi(U)$. Similarly, $\psi^{-1}(h(B))$ is the set of points in $U$ for which $\ind\n\ne-1$ (because $P$ is orientation reversing), and it is discrete and closed in $U$.

  From this we conclude that the set of branch points is discrete and closed in $M$, because otherwise they would have an accumulation point in $M$, and then they would not be discrete and closed in a neighborhood of this accumulation point.
\end{proof1}

We proceed to show the stability of the index with respect to uniform convergence of local homeomorphisms (\propref{prop:unif conv preserves local deg}).
We start with a technical lemma:

%%%%%%%%%%%%%
\begin{lemma}
  \label{lem:unif_conv_k}
  Let $B_R(p)$ be a geodesic disc in $\M$.
  Let $u_n \in C(B_R(p); \Sph)$  be a sequence converging uniformly to $u\in C(B_R(p);\Sph)$, satisfying
  \[
    \ind_p(u) = d.
  \]
  Let $r\in(0,R)$ be such that the open neighborhood $B_r(p)$ of $p$ satisfies the conditions on $U$ in the definition of the index.
  Then, there exist $\e>0$, $\delta\in(0,r)$ and $N_0\in\bbN$ such that for all $n>N_0$,
  \begin{enumerate}[itemsep=0pt,label=(\alph*)]
    \item
          $B_\e(u(p)) \cap u_n(\dB_r(p)) = \emptyset$. That is, for $n$ large enough, the distance between $u(p)$ and the image of $\dB_r(p)$ under $u_n$ is at least $\e$.

    \item
          $u_n(B_\delta(p)) \subset B_{\e/2}(u(p))$.

    \item
          $\deg(u_n,B_r(p),u(p)) = d$.

    \item
          $u(p) \not\in u_n(\overline{B_r(p)} \setminus B_\delta(p))$.

  \end{enumerate}
\end{lemma}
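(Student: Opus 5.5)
The plan is to derive all four items from compactness of $\dB_r(p)$, continuity of $u$ at $p$, and the standard homotopy invariance of the degree. The only delicate point is choosing the constants in the right order.

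For (a), the hypothesis on $B_r(p)$ gives $u^{-1}(\{u(p)\})\cap \overline{B_r(p)}=\{p\}$, so $u(p)\notin u(\dB_r(p))$. Since $\dB_r(p)$ is compact and $u$ is continuous, $u(\dB_r(p))$ is a compact set not containing $u(p)$. Hence $\eta:=\operatorname{dist}(u(p),u(\dB_r(p)))>0$. Set $\e:=\eta/2$ and choose $N_0$ so that $\sup|u_n-u|<\e/4$ for $n>N_0$. Then for $x\in\dB_r(p)$,
\[
d(u_n(x),u(p))\ge \eta-\e/4>\e,
\]
which is (a).

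For (b), continuity of $u$ at $p$ gives $\delta\in(0,r)$ with $u(B_\delta(p))\subset B_{\e/4}(u(p))$. For $n>N_0$ the triangle inequality then yields $u_n(B_\delta(p))\subset B_{\e/2}(u(p))$. No enlargement of $N_0$ is needed.

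For (c), I would use the straight-line (geodesic) homotopy $H_t$ from $u$ to $u_n$. It is well defined on $\Sph$ because $u_n$ and $u$ are uniformly within $\e/4$, which is less than $\pi$ once $\e$ is small; I may shrink $\e$ if necessary. Along $\dB_r(p)$, every $H_t(x)$ stays within $\e/4$ of $u(x)$, so it is at distance at least $\eta-\e/4>0$ from $u(p)$. Homotopy invariance of the degree (for instance \cite{FG95}) then gives $\deg(u_n,B_r(p),u(p))=\deg(u,B_r(p),u(p))=\ind_p(u)=d$. The last equality uses the fact that the index does not depend on the admissible neighbourhood. Alternatively, one can cite directly the stability of the degree under uniform perturbations smaller than the distance of the target point to the image of the boundary.

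The main obstacle is (d): $u(p)$ must be avoided on the whole annulus $\overline{B_r(p)}\setminus B_\delta(p)$, not only on its boundary. The key observation is that the compact set $K=\overline{B_r(p)}\setminus B_\delta(p)$ does not contain $p$. By the isolation hypothesis, $u(p)\notin u(K)$, so $\eta':=\operatorname{dist}(u(p),u(K))>0$. I would therefore fix $\delta$ first in (b), then compute $\eta'$, replace $\e$ by $\min(\e,\eta')$, and enlarge $N_0$ so that $\sup|u_n-u|<\e/4$. The estimates in (a) and (b) only improve under this change, provided the choice of $\delta$ for (b) is made relative to the original $\e$ and (b) is restated with the smaller constant. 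To avoid this circularity cleanly, I would choose $\e$ once as $\min(\eta,\eta')/2$, with $\eta'$ computed for the annulus with a provisional $\delta$. Then I would take $\delta'\le\delta$ satisfying (b) for this $\e$. Shrinking $\delta$ enlarges the annulus, so $\eta'$ would have to be recomputed.

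The resolution is that for (d) a crude bound suffices and the $\e$ of (b) is not needed. After fixing $\e$ and $\delta$ for (a) and (b), compute $\eta'>0$ for the resulting annulus, and only then enlarge $N_0$ so that $\sup|u_n-u|<\eta'$. This gives (d) without disturbing (a), (b) or (c).
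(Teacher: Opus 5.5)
Your proof is correct and follows essentially the same route as the paper. Both use compactness and positive distance to $u(p)$ for (a), (b) and (d), a straight-line homotopy restricted to $\dB_r(p)$ for (c), and a final enlargement of $N_0$ for (d). The paper does exactly what your closing resolution does: it picks a separate threshold $N_4$ for the annulus and takes $N_0=\max\{N_1,\dots,N_4\}$.

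The intermediate worry about circularity in (d) is unnecessary, and so is the remark about possibly shrinking $\e$ in (c). With $\e=\eta/2$ the perturbation $\e/4$ is already small enough for the homotopy, and (d) needs only its own bound on $\sup d(u_n,u)$.
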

%%%%%%%%%%%%%

\begin{figure}
  \[
    \btkz
    \draw (0,0) circle (3.0);
    \draw[thick, color=warmblue, fill=warmblue!20] (0,0) circle (2.4);
    \draw[thick, color=ocre, fill=ocre!20] (0,0) circle (1.2);
    \draw (0,0) node[circle,draw=black, fill = black,inner sep=2pt, label=below:$p$] {};
    \node at (0,0.5) {$B_\delta(p)$};
    \node at (0,1.8) {$B_r(p)$};
    \node at (0,2.7) {$B_R(p)$};
    \node at (0,3.5) {$\M$};

    \begin{scope}[shift={(7,0)}]
      \draw[thick,color = warmblue, domain=0:360,samples=200,variable=\t]
      plot ({(2.5 + 0.6*cos(3*\t))*cos(2*\t)},
      {(2.5 + 0.6*cos(3*\t))*sin(2*\t)});
      \fill[color=ocre!20] (0,0) ellipse (0.7 and 0.6);
      \node at (2.15,2.4) {$u_n(\dB_r(p))$};
      \draw (0,0) node[circle,draw=black, fill = black,inner sep=2pt, label=below:$u(p)$] {};
      \draw[thick, color=black] (0,0) circle (1.6);
      \draw[thick,dashed, color=black] (0,0) circle (0.8);
      \node at (0,1.1) {$B_\e(u(p))$};
      \node at (0,3.5) {$\Sph$};
    \end{scope}
    \etkz
  \]
  \caption{
    Left: geodesic discs in $\M$, centered at $p$ and of radii $\delta < r < R$.
    Right: the codomain. The blue non-simple loop is $u_n(\dB_r(p))$, i.e., the image under $u_n$ of the geodesic circle of radius $r$ centered at $p$. The Solid black loop is a geodesic circle of radius $\e$ centered at $u(p)$. The dashed loop  is a geodesic circle of radius $\e/2$ centered at $u(p)$. The ochre domain is  $u_n(B_\delta(p))$.
  }
  \label{fig:1}
\end{figure}

%%%%%%%%%%%%%
\begin{proof}
  The statement of the lemma is illustrated in \figref{fig:1}.
  \begin{enumerate}[itemsep=0pt,label=(\alph*)]
    \item
          The assumption on $r$ implies that $u(p)\notin u(\dB_r(p))$. Since $u$ is continuous, there exists an $\e>0$, such that  $B_{2\e}(u(p))\cap u(\dB_r(p))=\emptyset$.
          By the uniform convergence $u_n\to u$, there exists an $N_1\in\bbN$ such that
          \[
            B_{\e}(u(p))\cap u_n(\dB_r(p))=\emptyset
          \]
          for all $n>N_1$.

    \item
          By the continuity of $u$, there is exists a $\delta\in(0,r)$, such that $u(B_{\delta}(p))\subset B_{\e/3}(u(p))$. By the uniform convergence $u_n\to u$, there exists an $N_2\in\bbN$ such that
          \[
            u_n(B_{\delta}(p))\subset B_{\e/2}(u(p))
          \]
          for all $n>N_2$,

    \item
          By the uniform convergence $u_n\to u$, there exists an $N_3\in\bbN$ such that the affine homotopy between $u(\dB_r(p))$ and $u_n(\dB_r(p))$ does not intersect $u(p)$ for every $n>N_3$. The assertion follows then from the stability of the degree under homotopies.

    \item
          Denote
          \[
            C = \overline{B_r(p)}\setminus B_{\delta}(p).
          \]
          Since its image $u(C)$ is compact and since by the definition of $r$, $u(p)\not\in u(C)$,
          there exists an $\e_1>0$ such that
          \[
            u(C)\cap B_{\e_1}(u(p))=\emptyset.
          \]
          By uniform convergence, there exists an $N_4\in\bbN$ such that
          \[
            u(p)\notin u_n(C)
          \]
          for all $n>N_4$.
  \end{enumerate}
  Finally, take $N_0=\max\{N_1,N_2,N_3,N_4\}$.
\end{proof}
%%%%%%%%%%%%%

With this lemma at hand, we can prove the following stability of the index:

%%%%%%%%%%
\begin{proposition}
  \label{prop:unif conv preserves local deg}
  Let $B_R(p)$ be a geodesic disc in $\M$.
  Let $u_n \in C(B_R(p); \Sph)$  be a sequence of local homeomorphisms converging uniformly to $u\in C(B_R(p);\Sph)$.
  Assume that $\ind_p(u)$ is well-defined.
  Then either all $u_n$ are eventually orientation-preserving, and  $\ind_p(u)=1$, or all $u_n$ are eventually orientation-reversing, and  $\ind_p(u)=-1$.
\end{proposition}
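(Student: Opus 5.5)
Fix $r\in(0,R)$ so that $B_r(p)$ is admissible for $\ind_p(u)=d$, and take $\e,\delta,N_0$ as in Lemma~\ref{lem:unif_conv_k}. Since $B_R(p)$ is connected and each $u_n$ is a local homeomorphism, each $u_n$ is either orientation-preserving or orientation-reversing on all of $B_R(p)$: the local index $\ind_x(u_n)\in\{\pm1\}$ is well defined at every $x$ and locally constant, hence constant. So it suffices to prove that for every $n>N_0$ the number $d$ equals the sign $\sigma_n\in\{\pm1\}$ of $u_n$. Once this holds, $d\in\{\pm1\}$, and all $u_n$ with $n>N_0$ have the same orientation $\sigma_n=d$. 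This is precisely the dichotomy in the statement.

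The plan is to count preimages. Fix $n>N_0$ and set $q=u(p)$. By Lemma~\ref{lem:unif_conv_k}(c), $\deg(u_n,B_r(p),q)=d$. By Lemma~\ref{lem:unif_conv_k}(d), every preimage of $q$ under $u_n$ in $\overline{B_r(p)}$ lies in $B_\delta(p)$. The preimage set $u_n^{-1}(q)\cap\overline{B_r(p)}$ is compact and discrete, because $u_n$ is a local homeomorphism, so it is finite. By additivity of the degree, $d=\sum_{x}\ind_x(u_n)=\sigma_n\cdot\#\{x\in B_r(p): u_n(x)=q\}$. Thus $d=\sigma_n k_n$ with $k_n\ge0$, and the remaining task is to show $k_n=1$.

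This is the main obstacle: a local homeomorphism can be many-to-one, so degree considerations alone do not exclude $k_n\ge2$ or $k_n=0$. I would show $k_n\ge1$ using Lemma~\ref{lem:unif_conv_k}(b) and the fact that $u_n$ is an open map. By (b), $u_n(B_\delta(p))\subset B_{\e/2}(q)$, and $u_n(p)$ is within $\e/2$ of $q$. The key point is that the degree $\deg(u_n,B_r(p),y)$ is constant in $y\in B_\e(q)$ by (a), and is therefore equal to $d$ there. For $k_n\ge1$, note that if $k_n=0$ then the constant degree on $B_\e(q)$ is zero. But $y=u_n(p)\in B_\e(q)$ has at least one preimage, and every preimage contributes the same sign $\sigma_n$, so the degree at $y$ is nonzero. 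This contradiction gives $k_n\ge1$.

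For $k_n\le1$, I would use the same constancy to vary $y$ inside $B_{\e}(q)$ and pass to the limit $n\to\infty$. The number of preimages $k_n=|d|$ is independent of $n$, and all preimages lie in $B_\delta(p)$. Since $\delta$ can be taken arbitrarily small, with $N_0$ depending on it, these preimages concentrate at $p$. I do not expect this alone to force $k_n=1$, since $z\mapsto z^2$ is a local homeomorphism away from $0$. What rules out $k_n\ge2$ is that $u_n$ is a local homeomorphism at $p$ itself. So I would argue directly that $\ind_p(u_n)=\sigma_n$, together with a homotopy $u_t$ on a small ball around $p$ chosen so that the degree along $\dB$ is unchanged. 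My expectation is that the intended argument couples (b) with injectivity of $u_n$ near $p$ in this way to get $|d|=1$. This step is where I would concentrate effort.
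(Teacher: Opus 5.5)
Your set-up is sound. The constant sign $\sigma_n$, the identity $d=\sigma_n k_n$, and the argument for $k_n\ge 1$ are all correct. For $k_n\ge1$, you evaluate the degree, which is constant on $B_\e(q)$ by (a), at the point $u_n(p)\in B_{\e/2}(q)$.

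\textbf{The gap.} The heart of the proposition is $k_n\le 1$, and you have not proved it. You only point to a direction, and that direction does not work. Injectivity of $u_n$ near $p$ holds on some neighbourhood $W_n$ of $p$ whose size depends on $n$ and is not controlled by the uniform convergence. Nothing prevents $W_n$ from being much smaller than $B_\delta(p)$, so it cannot rule out further preimages of $q$ in $B_\delta(p)\setminus W_n$. A homotopy on a small ball around $p$ would only recompute $\ind_p(u_n)=\sigma_n$, which you already know. It says nothing about $\deg(u_n,B_r(p),q)$ unless that small ball already contains all preimages of $q$, which is exactly what is at stake. Local information at $p$ cannot suffice; you need a global argument over the disc $B_\e(q)$.

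\textbf{The missing idea: a covering-space argument.}
\begin{enumerate}
  \item Put $V=B_\e(q)$ and $U=u_n^{-1}(V)\cap B_r(p)$. By (a), the degree $\deg(u_n,B_r(p),y)$ equals $d$ for every $y\in V$, and every preimage contributes the same sign $\sigma_n$. Hence each $y\in V$ has exactly $|d|$ preimages in $U$.
  \item So $u_n|_U:U\to V$ is a local homeomorphism with constant finite fibres, and therefore a covering map. Alternatively, (a) makes this restriction proper.
  \item Since $V$ is simply connected, each connected component of $U$ is mapped homeomorphically onto $V$. Each component therefore contains exactly one preimage of $q$.
  \item By (b), $B_\delta(p)\subset U$, and $B_\delta(p)$ is connected, so it lies in a single component of $U$.
  \item By (d), all $k_n$ preimages of $q$ in $\overline{B_r(p)}$ lie in $B_\delta(p)$, hence in that one component.
\end{enumerate}
Therefore $k_n=1$ and $|d|=1$.

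This is how the paper concludes. Note that it uses (b) and (d) at the scale of the whole ball $B_\delta(p)$, not injectivity of $u_n$ at $p$. The same argument also covers your case $k_n\ge1$ automatically, since $B_\delta(p)\subset U$ forces $U\neq\emptyset$.
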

%%%%%%%%%%

%%%%%%%%%
\begin{proof}
  Recall from the proof of the first part of \thmref{thm:main} that for a local homeomorphism, the index of every point is well-defined, and equals to $+1$ if the homeomorphism is orientation preserving, and $-1$ if it is orientation reversing. Since the sign is locally constant, the sets of points of index $1$ and index $-1$ are both open, so by connectedness the sign is constant.

  Let $\e>0$, $\delta\in(0,r)$ and $N_0\in\bbN$, be as in \lemref{lem:unif_conv_k}, and fix some arbitrary $n>N_0$.
  As in \lemref{lem:unif_conv_k}, let $d=\ind_p(u)$.
  Denote
  \[
    V = B_{\e}(u(p)).
  \]
  Let $y\in V$. By Property (a) of \lemref{lem:unif_conv_k}, $y$ and $u(p)$ belong to the same connected component of
  $\Sph\setminus u_n(\dB_r(p))$, hence
  \beq\label{eq:prop_aux_1}
  \deg(u_n,B_r(p),y) = \deg(u_n,B_r(p),u(p)) = d,
  \eeq
  where the last equality follows from Property (c).
  Next, we note that \cite[Theorem 2.9(1)]{FG95} is applicable for local homeomorphisms between manifolds, and implies that
  \beq\label{eq:degree sum}
  \deg(u_n,B_r(p),y) = \sum_{x\in u_n^{-1}(y)\cap B_r(p)} \ind_x(u_n) = \pm |u_n^{-1}(y)\cap B_r(p)|,
  \eeq
  where the last equality follows from the fact that the index is either $+1$ or $-1$ for all $x$, as discussed above.

  Let $U = u_n^{-1}(V)$ and denote $u_U = u_n|_U$. Then, $u_U:U\to V$ is a local diffeomorphism (a fortiori a local homeomorphism) satisfying that for every $y\in V$, the fiber $u_U^{-1}(y)$ has cardinality $|d|$. It is therefore a covering map\footnote{Indeed, let $y\in V$ and let $\{x_1,\dots,x_d\} = u_n^{-1}(y)$. Take disjoint neighborhood $U_i$ of the $x_i$, on each of which $u_n$ is a homeomorphism, and shrink them such that their they have a common image $W$, which is a neighborhood of $y$. Each fiber of $W$ already contains one point in every $U_i$, hence $u_n^{-1}(W) = \bigsqcup_i U_i$, which proves the claim.}. The map $u_n$ restricted to a connected component of $U$ is still a covering map (see \cite{395568}), and since $V$ is simply connected, this restriction must be a homeomorphism (as follows from either \cite[Prop. 1.32]{Hat02}, or from the universal property of the universal cover). Thus $U$ has $|d|$ connected components $U_1,...,U_{|d|}$, and $u_n|_{U_i}$ is a homeomorphism for each $i$.

  Consider the fiber $u_U^{-1}(u(p))$,
  which consists of $|d|$ points,
  \[
    (x_1,\dots,x_d) \in U_1\times \dots \times U_{|d|}.
  \]
  By Properties (d) and (b) in  \lemref{lem:unif_conv_k},
  \[
    u_U^{-1}(u(p)) \subset B_\delta(p) \subset U,
  \]
  and since $B_\delta(p)$ is connected, it must be contained in a single connected component of $U$. Since the $|d|$ points $\{x_i\}$ belong at the same time to separate connected components of $U$ and to a single connected component of $U$, we conclude that $|d|=1$.

  It now follows from \eqref{eq:prop_aux_1} that for all $n>N_0$ all $u_n$ are either orientation-preserving or that all of them are orientation-reversing, and that $d=1$ or $d=-1$, accordingly.
\end{proof}

By combining Proposition~\ref{prop:unif conv} and Proposition~\ref{prop:unif conv preserves local deg}, we complete the proof of Theorem~\ref{thm:main}:

\begin{proof1}{of Theorem~\ref{thm:main} part 2}
  Let $f\in\Wiso$ be a map  whose Gauss map $\n$ satisfies $\ind_p(\n) \ne -1$ for some $p\in \M$.
  Assume by contradiction that there exists a sequence of immersions $(f_n)\subset\Wiso\cap C^2(\M;\R^3)$ converging to $f$ in $W^{2,2}$, and denote by $\n_n$ the corresponding Gauss maps.
  Consider some closed geodesic ball $\overline{B_R(p)}\subset \M$. By \propref{prop:unif conv}, $\n_n\to\n$ uniformly on $\overline{B_R(p)}$.
  For every $n$, $\n_n$ is continuously differentiable with a negative Jacobian, so by the inverse mapping theorem $\n_n$ is a local homeomorphism, with $\ind_x(\n_n)=-1$ for every $x\in B_R(p)$.
  Thus we can apply \propref{prop:unif conv preserves local deg} and obtain a contradiction to $\ind_p(\n) \ne -1$.
\end{proof1}

%%%%%%%%%%%%%%%%%%%%%%%%%%%%%%%%%%%%%%%%%%%%
\section{Constructing isometric immersions with branch points}
\label{sec:construction}

In this section we prove \thmref{thm:generic}, which
asserts that in a hyperbolic surface, every point has a neighborhood that can be immersed in $\R^3$ with a branch point at that point.
In particular, together with \thmref{thm:main}, it implies that, generally, $C^2$-isometric immersions are not dense in the space of $W^{2,2}$-isometric immersions.

%%%%%%%%%%%%%%%%%%%%%%%%%%%%%%%%%%%%%%%%%%%%
\subsection{Immersions of geodesic wedges}

The following proposition provides the the building block for the construction of the isometric immersion in the proof of \thmref{thm:generic}:

\begin{proposition}\label{prop:wedge immersion}
  Let $(\M,\g)$ be a two-dimensional Riemannian manifold. Let $p\in M$ be such that there is a neighborhood of $p$ in which $K_\g<0$. Let $\Gamma_1,\Gamma_2$ be two geodesics passing through $p$. Then there exists a neighborhood $U$ of $p$, and a $C^2$ isometric immersion $f:U\to\R^3$ such that $\Gamma_1,\Gamma_2$ are mapped into straight lines. In particular, any of the $4$ wedges of $U$ bounded by $\Gamma_1,\Gamma_2$ may be immersed isometrically in $\R^3$ with the same straight lines constraint.
\end{proposition}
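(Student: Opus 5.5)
We will construct $f$ through its second fundamental form, solving the Gauss--Codazzi system as a characteristic initial value problem with data on $\Gamma_1$ and $\Gamma_2$. The key observation is that a geodesic is mapped to a straight line if and only if it is an asymptotic curve: its image then has zero curvature in $\R^3$, since its geodesic curvature vanishes and its normal curvature $\II(\dot\Gamma,\dot\Gamma)$ vanishes. So we need a $C^1$ symmetric tensor $\II$ near $p$ with $\det_\g\II=K_\g$, satisfying the Codazzi equations, and with $\Gamma_1,\Gamma_2$ as asymptotic curves. The fundamental theorem of surface theory, in its $C^1$-second-fundamental-form version (Hartman--Wintner), then yields a $C^2$ isometric immersion $f$ on a simply connected neighborhood $U$ with this $\II$. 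The restriction to any of the four wedges is immediate.

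\textbf{Coordinates.} If $\Gamma_1,\Gamma_2$ are transversal at $p$, we introduce coordinates $(s,t)$ near $p$ in which $\Gamma_1=\{t=0\}$ and $\Gamma_2=\{s=0\}$; for instance, geodesic (Fermi-type) parametrizations along each curve. We seek $\II$ for which the coordinate lines are asymptotic, that is, $\II=2M\,ds\,dt$ with $L=N=0$. Requiring this on all of $U$ is convenient, since it reduces the problem to Chebyshev-type asymptotic coordinates, but it is too restrictive in arbitrary coordinates. We will therefore look for asymptotic coordinates $(u,v)$ whose coordinate axes are $\Gamma_1$ and $\Gamma_2$ and in which $\II=2M\,du\,dv$.

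\textbf{The PDE system.} In asymptotic coordinates, Gauss--Codazzi for a surface with $K<0$ take the familiar form of a hyperbolic system. For unknowns $x(u,v)$ (the immersion) and $\n(u,v)$ this is the Lelieuvre system, $x_u=\n\times\n_u/\sqrt{-K}$-type relations. Alternatively, one writes the unknowns as the metric coefficients $E,F,G$ in $(u,v)$, constrained to be pullbacks of $\g$, together with $M=\sqrt{-K(EG-F^2)}$. I would instead write the unknown as the coordinate map $\Phi(u,v)$ into $(\M,\g)$. The requirement that $\Phi_u,\Phi_v$ be asymptotic for a Codazzi-compatible $\II$ gives a semilinear second-order hyperbolic equation of the form
\[
\nabla_{\partial_u}\Phi_v=A(\Phi,\Phi_u,\Phi_v),
\]
analogous to the Chebyshev-net / sine-Gordon equation in the constant-curvature case. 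Its characteristic data are $\Phi(u,0)=\Gamma_1(u)$ and $\Phi(0,v)=\Gamma_2(v)$. The free normalization of the parametrization of the characteristics, $u\mapsto\alpha(u)$ and $v\mapsto\beta(v)$, lets us prescribe these data as geodesics. The compatibility condition on the data is that each curve satisfies the ODE obtained by restricting the system to it. We must check that a geodesic is admissible for this: along an asymptotic line, the geodesic curvature equals an expression involving $\partial K$ and the torsion $\sqrt{-K}$ (Beltrami--Enneper). I expect this to hold with a suitable choice of parametrization speed, or else to be absorbed by keeping the speed as an additional unknown function along the curve.

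\textbf{Solving and the main obstacle.} The Goursat problem for a semilinear hyperbolic system with smooth characteristic data on two transversal lines has a unique local solution, obtained by Picard iteration on the integral form. This gives a $C^2$ solution on a small square, which is non-degenerate near $(0,0)$ since $\Gamma_1'(0)$ and $\Gamma_2'(0)$ are transversal. The main obstacle is correctly deriving the semilinear system in terms of intrinsic data, and verifying that geodesics are compatible characteristic data, i.e.\ that the Codazzi equations do not force a nonzero geodesic curvature on asymptotic lines. I would try first to express everything via the angle $\omega$ between the asymptotic directions and a frame adapted to $\g$, mimicking the derivation of sine-Gordon for $K\equiv-1$, and to check that $\II$ is symmetric and $C^1$. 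The case $\Gamma_1=\Gamma_2$ (tangent geodesics coincide) is trivial, taking any immersion in which $\Gamma_1$ is asymptotic, and it reduces to a one-curve Cauchy problem.
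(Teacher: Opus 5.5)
Your strategy is sound and genuinely different from the paper's. As written, however, it leaves open exactly the two points you call the main obstacle, and one of them rests on a false claim. Both close quickly.

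\emph{The equation.} With $\II=2M\,du\,dv$, the Codazzi equations are $M_u=M(\Gamma^1_{11}-\Gamma^2_{12})$ and $M_v=M(\Gamma^2_{22}-\Gamma^1_{12})$, where $\Gamma^k_{ij}$ are the Christoffel symbols of $\Phi^*\g$. Combine these with the Gauss equation $M^2=-K_\g(EG-F^2)$ and the identity $\partial_i\log\sqrt{EG-F^2}=\Gamma^k_{ik}$. They become equivalent to $\Gamma^1_{12}=-\tfrac14\partial_v\log(-K_\g)$ and $\Gamma^2_{12}=-\tfrac14\partial_u\log(-K_\g)$, that is,
\[
\nabla_{\partial_u}\Phi_v=-\tfrac14\bigl(d\kappa(\Phi_v)\,\Phi_u+d\kappa(\Phi_u)\,\Phi_v\bigr),\qquad \kappa:=\log(-K_\g).
\]
This is the Chebyshev-net equation when $K_\g$ is constant. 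Conversely, a local diffeomorphism $\Phi$ solving it, with $M:=\sqrt{-K_\g(EG-F^2)}$, solves Gauss--Codazzi with the coordinate lines asymptotic.

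\emph{The data.} There is no compatibility condition. For $\Phi_{uv}=B(\Phi)(\Phi_u,\Phi_v)$, Goursat data on $\{v=0\}$ and $\{u=0\}$ are free up to agreement at the corner. Restricting the equation to $\{v=0\}$ gives a linear ODE for $\Phi_v(\cdot,0)$, not a constraint on $\Phi(\cdot,0)$. So you may simply take $\Phi(u,0)=\Gamma_1(u)$ and $\Phi(0,v)=\Gamma_2(v)$; no reparametrization or extra speed unknown is needed.

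Your Beltrami--Enneper remark is incorrect. That theorem fixes the torsion $\pm\sqrt{-K_\g}$ of an asymptotic line in $\R^3$. Its curvature, which equals its geodesic curvature, is unconstrained: the Goursat problem shows that any two transversal curves through $p$ can be made asymptotic. Geodesy enters only at the end, to turn ``asymptotic'' into ``straight''.

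\emph{Comparison with the paper.} The paper keeps fixed coordinates on $\M$ in which $\Gamma_1=\{(u,0)\}$ and $\Gamma_2=\{(u,u)\}$. It uses the Rozhdestvenskii--Poznyak Riemann invariants $r,s$ (the slopes of the asymptotic directions), so the straight-line condition becomes the algebraic data $s=0$ on $\Gamma_1$ and $r=1$ on $\Gamma_2$. The price is a quasilinear system whose characteristics $\eta_t,\gamma_t$ depend on the unknowns. This is why the paper needs uniform second-derivative bounds on the Picard iterates and Arzel\`a--Ascoli to get $C^1$ invariants.

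Your formulation takes the asymptotic net itself as the unknown. The characteristics are then fixed coordinate lines, and the problem is a semilinear Goursat problem solved directly by Picard iteration with full smoothness, giving a $C^\infty$ immersion. Its costs are deriving the equation above and inverting $\Phi$ near $(0,0)$. The inverse function theorem allows the latter, since $\Phi_u(0,0)$ and $\Phi_v(0,0)$ are the transversal tangents of $\Gamma_1,\Gamma_2$.
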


\begin{proof}
  Choose local coordinates $(u,v)$ around $p$, such that $\Gamma_1$ is parametrized by $(u,0)$ and $\Gamma_2$ is parametrized by $(u,u)$. By the fundamental theorem of surface theory, the existence of a $C^2$ isometric immersion is equivalent to the existence of a continuous matrix-valued function
  \[
    \II=\brk{\begin{matrix}
        L & M \\
        M & N
      \end{matrix}},
  \]
  which together with the matrix representation of the metric $\g$ satisfies the Gauss-Codazzi equations. Assuming that $N$ (resp., $L$) does not vanish, the Gauss equation $\det \II = K_\g \det \g$ allows to express $L$ (resp. $N$) in terms of the two other entries. Then, the Codazzi equations form a quasi-linear hyperbolic system in two variables. As we shall see, the condition that $\Gamma_1$ and $\Gamma_2$ map into straight lines amounts to characteristic boundary conditions along those curves.

  We consider a representation of the Gauss-Codazzi equations due to Rozhdestvenskii and Poznyak \cite{Roz62}, in which the dependent variables are the pair of Riemann invariants (we use a slight variation which appears in \cite[p.~159]{PS96}). First, note that each of the two asymptotic directions (null-directions of $\II$) form a vector field; we call the two families of integral curves of these vector fields the \emph{characteristic curves} of the surface.
  The slopes in the $u$-$v$ plane of these characteristic curves (i.e., $dv/du$)  are given by
  \[
    r=\frac{-M-qk}{N}
    \Textand
    s=\frac{-M+qk}{N},
  \]
  where $k=\sqrt{-K_\g}$ and $q=\sqrt{\det\g}$.
  The inverse transformation is
  \beq
  L = \frac{2 r s q k}{s-r}
  \qquad
  M = -\frac{(s+r)q k}{s-r}
  \qquad
  N = \frac{2q k}{s-r}.
  \label{eq:LMN}
  \eeq

  Substituting $r,s$ into the Codazzi equations we obtain a system of equations of the form
  \beq\label{eq:RS}
  \begin{split}
    \partial_u r+s\partial_v r & =F((u,v),r,s)  \\
    \partial_u s+r\partial_v s & =F((u,v),s,r),
  \end{split}
  \eeq
  where $F$ is a smooth function determined by $\g$ and its first derivatives (see \cite[p.~159]{PS96} for details).

  Since $\Gamma_1,\Gamma_2$ are geodesics, they have vanishing geodesic curvature. Thus, the condition that they map to straight lines---vanishing total curvature---is equivalent to the condition that they have vanishing normal curvature, i.e., that $\Gamma_1$ and $\Gamma_2$ are characteristic curves. Since the variables $r$ and $s$ represent the slopes of the characteristic curves, this amounts to one of them being $0$ along $\Gamma_1$, and the other being $1$ along $\Gamma_2$. We choose
  \[
    \begin{aligned}
       & s=0
       & \qquad
       & \text{along $\Gamma_1$}  \\
       & r=1
       & \qquad
       & \text{along $\Gamma_2$}.
    \end{aligned}
  \]
  Note that we did not choose coordinates such that $\Gamma_2$ is parameterized by $(0,v)$ because then its slope would be undefined ("$r=\infty$"). Apart from avoiding $(0,v)$, the choice of parametrizing $\Gamma_2$ by $(u,u)$ is arbitrary.
  The proposition will thus be proved by obtaining $C^1$ solutions to \eqref{eq:RS} with these boundary conditions.
  The existence of solutions to such systems of quasilinear equations is well-known, and the method is outlined in \cite[Chapter~V, \S7]{CH61b}.
  For completeness, we describe below the main idea.

  Existence is proved in a quadrilateral domain with vertices $(h,0),(h,h),(-h,0),(-h,-h)$, for $h$ sufficiently small.
  We  start by rewriting \eqref{eq:RS} in integral form,
  \beq\label{eq:RS integral equations}
  \begin{split}
    r(u,v) & =1+\int_{u_{\eta}(u,v)}^u F(\eta_t(u,v),r(\eta_t(u,v)),s(\eta_t(u,v)))\,dt        \\
    s(u,v) & =\int_{u_{\gamma}(u,v)}^u F(\gamma_t(u,v),s(\gamma_t(u,v)),r(\gamma_t(u,v)))\,dt,
  \end{split}
  \eeq
  where $\eta_t(u,v)$ is the integral curve of the vector field $(1,s)$ starting at $(u,v)$ and $\gamma_t(u,v)$ is the integral curve of the vector field $(1,r)$ starting at $(u,v)$; the lower bounds of the integrals, $u_{\eta}(u,v)$ and $u_{\gamma}(u,v)$, are the values of $u$ at which the curves $\eta_t(u,v)$ and $\gamma_t(u,v)$ intersect the curves $\Gamma_2$ and $\Gamma_1$, respectively.

  The construction proceeds via Picard-like iterations, starting with $r^{(0)}(u,v)=1$ and $s^{(0)}(u,v)=0$.
  By choosing $h$ small enough, we guarantee that $u_\eta,u_\gamma$ are well defined, and that the iteration map is contractive in the uniform-convergence norm.  Thus, there exist continuous functions $r(u,v)$ and $s(u,v)$ satisfying \eqref{eq:RS integral equations}, whose directional derivatives $\partial_u+s\partial_v$ and $\partial_u+r\partial_v$ exist via \eqref{eq:RS}.

  It remains to show that $r$ and $s$ are differentiable. First we bound the second partial derivatives uniformly. Denote by $\Norm{(r,s)}_{C^2}$ the sum of the $\sup$-norms of all the second partial derivatives of $r$ and $s$.
  We compute the second partial derivatives explicitly by differentiating under the integral sign, once for the first derivatives and again for the second derivatives. This involves differentiating $u_\eta,u_\gamma$, which is done using the implicit function theorem, since the equation $\eta_t(u,v)-t=0$ defines $t$ as a function of $u,v$, which is precisely $u_\eta$ (and similarly with $u_\gamma$). We then get a bound of the form
  \[
    \|(r^{(n+1)},s^{(n+1)})\|_{C^2}\le g(h\|(r^{(n)},s^{(n)})\|_{C^2}),
  \]
  for some function $g:\R_+\to\R_+$ such that $g_0:=\lim_{x\to0^+}g(x)$ exists.
  Again, by choosing $h$ small enough we guarantee that $\|(r^{(n)},s^{(n)})\|_{C^2}$ is uniformly bounded, for example, by $g_0+1$. Finally we invoke the Arzela-Ascoli theorem, to obtain that a subsequence of the first derivatives of $(r^{(n)},s^{(n)})$ converges uniformly, which implies the differentiability of $r$ and $s$.
\end{proof}

Let $(\M,\g)$, $p\in\M$, and $\Gamma_1,\Gamma_2$ be as in \propref{prop:wedge immersion}, and consider one of the wedges $V$ bounded by $\Gamma_1,\Gamma_2$ (which are viewed  from here on as rays emanating from $p$). By the proposition, and after applying an appropriate translation and rotation, there exists an isometric immersion $f:V\to\R^3$ mapping $\Gamma_1,\Gamma_2$ into straight rays $l_1,l_2$ emanating from the origin in the $xy$-plane.
Note that this procedure defines $f$ uniquely up to reflection across the $xy$-plane.
Furthermore, we have a choice of orientation, i.e., whether $\n(p)$ points upwards or downwards; in this case, we will always choose $\n(p) = (0,0,1)$.

Let $u_1,u_2\in T_p\M$ be the unit vectors pointing in the direction of $\Gamma_1,\Gamma_2$. namely, $l_1 = df_p(u_1)$ and $l_2 = df_p(u_2)$.
$\n$ is perpendicular to $l_1$ along $\Gamma_1$, so it rotates in a fixed plane along $\Gamma_1$, see Figure~\ref{fig:wedge}.

\begin{figure}
  \begin{center}
    \btkz

    % shaded wedge
    \fill[gray!20]
    (0,0) -- (4,0)
    arc[start angle=0,end angle=45,radius=4]
    -- cycle;
    % rays
    \draw[line width = 2, warmblue] (0,0) -- (4.5,0);
    \draw[line width = 2, warmblue] (0,0) -- ({4.5*cos(45)},{4.5*sin(45)});
    \node[color=warmblue]  at (2,-0.4) {$\Gamma_1$};
    \node[color=warmblue]  at (2,2.6) {$\Gamma_2$};
    \node at (2.2,1.) {$V$};
    \draw[-{Stealth[length=10pt,width=10pt]}, ocre, line width = 2] (0,0) -- (1,0);
    \draw[-{Stealth[length=10pt,width=10pt]}, ocre, line width = 2] (0,0) -- (0.707,0.707);
    \node[color = ocre] at (0.5,-0.3) {$u_1$};
    \node[color = ocre] at (0.,0.7) {$u_2$};
    \draw (0,0) node[circle,draw=black, fill = white,inner sep=2pt, label=left:$p$] {};

    \node[inner sep=0pt] at (9,1)
    {\includegraphics[width=7cm]{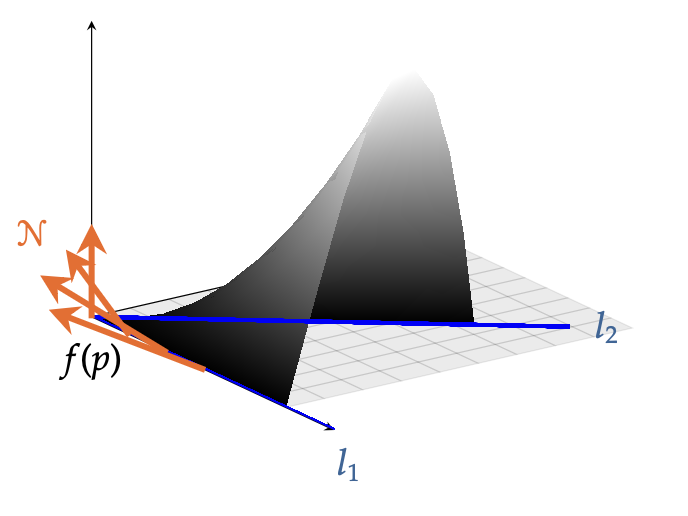}};
    \etkz
  \end{center}
  \caption{
    An isometric immersion of a wedge.
    Left: The domain $V\subset\M$ bounded  by the geodesic rays $\Gamma_1,\Gamma_2$, which intersect at $p$.
    Right: The surface $f(V)$, which $f(\Gamma_1)$ and $f(\Gamma_2)$ mapped along the rays $l_1$ and $l_2$.
    The orange arrows depict the rotation of the Gauss map along $l_1$, which is in this case an \emph{outward immersion}.
  }
  \label{fig:wedge}
\end{figure}

Together with $d\n_p(u_1)\perp\n_p$, we get that $d\n_p(u_1)$ is in the $xy$ plane, and is perpendicular to $l_1$.
It is either contained in the half plane (bounded by $l_1$) containing the $xy$-projection of $f(V)$, in which case $0<\ip{d\n_p(u_1),df_p(u_2)}=-II(u_1,u_2)$, and we say it points inwards, or it is contained in the other half plane, in which case $0>-II(u_1,u_2)$ and it is said to point outwards.
Everything holds analogously for $d\n_p(u_2)$. By the symmetry of the second fundamental form, $II(u_1,u_2)=II(u_2,u_1)$, so either $d\n_p(u_1),d\n_p(u_2)$ both point inwards or they both point outwards. In the first case we say $f$ is an \emph{inward immersion} of $V$, and in the second an \emph{outward immersion} of $V$ (see Figure~\ref{fig:wedge}). Note that if $f$ is inward, its reflection across the $xy$-plane is outward.
In the following, we will denote by $f^+$ the inward immersions and by $f^-$ the corresponding outward immersion.

The next lemma will be used in \secref{sec:gluing} to calculate the local degree of the Gauss map of the constructed immersion.

\begin{lemma}\label{lem:wedge rotation}
  Let $f:U\to \M$ be constructed as above (either an inward or outward immersion when restricted to $V$).
  Let $\gamma_r:[0,1]\to\M$ be an oriented geodesic circle of radius $r$ centered at $p$. Denote by $0<\alpha<\pi$ the angle between $\Gamma_1$ and $\Gamma_2$. Define $\tilde{\n}:\M\to\R^2$ to be the projection of $\n$ onto the $xy$ plane.
  Then for sufficiently small $r$, as $\gamma_r$ traverses $V$, the curve $\sigma_r:=\tilde{\n}\circ\gamma_r:[0,1]\to\R^2$ sweeps out an angle of $\alpha-\pi$ with respect to the origin.
\end{lemma}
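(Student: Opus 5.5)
The plan is to linearize at $p$ and then use the exact boundary information along $\Gamma_1,\Gamma_2$ to pin down the swept angle exactly, not just approximately. Since $f$ is $C^2$ on $U$ (\propref{prop:wedge immersion}), $\n$ is $C^1$. Parametrize the part of $\gamma_r$ inside $V$ by $\theta\in[0,\alpha]$, with $\gamma_r(\theta)=\exp_p(r\,w(\theta))$, where $w(\theta)\in T_p\M$ is the unit vector at angle $\theta$ from $u_1$, so $w(0)=u_1$ and $w(\alpha)=u_2$. Because $\n(p)=(0,0,1)$ projects to $0$, Taylor expansion gives
\[
\sigma_r(\theta)=r\,d\n_p(w(\theta))+o(r)
\]
uniformly in $\theta$. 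Here $d\n_p(w)$ lies in the $xy$-plane, since it is perpendicular to $\n(p)$.

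The first step is to show that the linear map $A:=d\n_p:T_p\M\to\R^2$ is invertible and orientation-reversing. Here $T_p\M$ is identified with the $xy$-plane via $df_p$, which maps $u_1,u_2$ onto $l_1,l_2$ in that plane. This follows from the Gauss equation \eqref{eq:Gauss_eq}: $\det(d\n\circ df^{-1})=K_\g(p)<0$. Consequently $|A w|\ge c>0$ on unit vectors, so for small $r$ the curve $\sigma_r$ stays away from the origin and its swept angle is well-defined. Moreover, that angle differs from the angle swept by $\theta\mapsto Aw(\theta)$ by an amount tending to $0$ as $r\to0$. Since $A$ is linear, orientation-reversing, and the sector from $u_1$ to $u_2$ has opening $\alpha<\pi$, the image curve $Aw(\theta)$ sweeps clockwise through an angle strictly between $-\pi$ and $0$.

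The second step supplies exact endpoint information. Along $\Gamma_1$, $f$ maps into the line $l_1$, so $\n\perp l_1$ there. Hence $\sigma_r(0)$ lies exactly on the line through the origin perpendicular to $l_1$, and likewise $\sigma_r(\alpha)$ lies on the line perpendicular to $l_2$. Next, $A u_i\neq0$: indeed $\II(u_i,u_i)=0$, since the $\Gamma_i$ are asymptotic, and $\II$ is nondegenerate, so $\II(u_1,u_2)\ne0$. By the inward/outward dichotomy, $Au_1$ and $Au_2$ point on the prescribed sides. For small $r$, $\sigma_r(0)$ and $\sigma_r(\alpha)$ have the same directions as $Au_1$ and $Au_2$ respectively. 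So the swept angle of $\sigma_r$ is determined modulo $2\pi$ by the directions of $Au_1$ and $Au_2$, and by the first step it is within a small error of a number in $(-\pi,0)$. That selects a unique representative.

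The third step computes this representative, with $u_1$ at angle $0$ and $u_2$ at angle $\alpha$.
\begin{itemize}
\item Inward case: $Au_1$ is at angle $\pi/2$ and $Au_2$ at angle $\alpha-\pi/2$. The difference $\alpha-\pi$ already lies in $(-\pi,0)$.
\item Outward case: the directions are $-\pi/2$ and $\alpha+\pi/2$. The difference $\alpha+\pi$ is congruent modulo $2\pi$ to $\alpha-\pi\in(-\pi,0)$, and that is the representative selected.
\end{itemize}
In both cases the swept angle is $\alpha-\pi$. The main obstacle I expect is bookkeeping: fixing orientation conventions consistently (the orientation of $\gamma_r$, the identification via $df_p$, and the sign of $\det A$) and checking that the $o(r)$ error cannot add a full turn. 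The latter is handled by the lower bound $|Aw|\ge c$, which keeps $\sigma_r/r$ uniformly close to $Aw(\theta)$ in angle.
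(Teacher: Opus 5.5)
Your proof is correct, but it settles the key point differently from the paper. That key point is ruling out extra turns of $\sigma_r$.

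Both arguments fix the endpoint directions of $\sigma_r$ exactly: perpendicular to $l_1$ and $l_2$, on the sides dictated by inward/outward. So in both, the swept angle is known modulo $2\pi$, and the difference lies only in how the representative is selected.

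The paper's route:
\begin{itemize}
\item It shows $\sigma_r$ rotates strictly clockwise, via $\det(\sigma_r,\sigma_r')<0$ for small $r$. This requires differentiating $\tilde{\n}$ along the curve and the continuity of $d\tilde{\n}$ at $p$.
\item It then uses a global degree argument. By the inverse function theorem, $\tilde{\n}$ is an orientation-reversing local homeomorphism near $p$, so the whole circle $\sigma_r$ has winding number $-1$. Hence no sub-arc can sweep $-2\pi$ or more.
\item The answer is read off as the representative in $(-2\pi,0]$.
\end{itemize}

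Your route:
\begin{itemize}
\item You compare $\sigma_r/r$ uniformly, in $C^0$, with the linear curve $\theta\mapsto d\n_p(w(\theta))$.
\item That curve's swept angle over the wedge lies explicitly in $(-\pi,0)$, because $d\n_p$ is orientation-reversing and $\alpha<\pi$.
\item The two curves share exact endpoint directions and their swept angles differ by $o(1)$, so the angles coincide.
\end{itemize}

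Your argument is more elementary and stays local to the wedge. It uses only differentiability of $\tilde{\n}$ at $p$, its continuity, and the orthogonality constraints along $\Gamma_1,\Gamma_2$. It needs neither $\sigma_r'$ nor the degree of $\tilde{\n}$ on the full circle. The paper's argument additionally shows that $\arg\sigma_r$ is strictly monotone, which the lemma does not require.
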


\begin{proof}
  For every $w\in\R^2\setminus\{0\}\cong \bbC\setminus\{0\}$, we view $\arg w$ as an element in the additive group $\R/(2\pi\bbZ)$. For $i=1,2$ denote by $q_i=\gamma_r(t_i)$ the intersection of $\gamma_r$ with $\Gamma_i$. For concreteness, we label rays such that one moves from $l_1$ to $l_2$ counterclockwise,  i.e., that $\arg l_2-\arg l_1=\alpha$ (and not $-\alpha$).

  Recall that $\n$ is perpendicular to $l_i$ along $\Gamma_i$.
  Since $l_i$, viewed as an element of $\bbS^2$ equals its own projection onto the $xy$-plane, the orthogonality of $\n$ and $l_i$ is preserved by the projection, which implies that $\sigma_r(t_i)\perp l_i$. By the preceding discussion,  for $r$ sufficiently small, the half plane in which $\sigma_r(t_i)$ lies is determined by whether $f$ is inward or outward, specifically,
  \[
    \arg\sigma_r(t_1)=\arg l_1+(-1)^k\frac{\pi}{2}
    \Textand
    \arg\sigma_r(t_2)=\arg l_2-(-1)^k\frac{\pi}{2},
  \]
  where $k=0$ if $f$ is inward and $k=1$ if $f$ is outward; see the following figure for $k=1$.

  \begin{center}
    \btkz
    % shaded wedge
    \fill[gray!20]
    (0,0) -- (4,0)
    arc[start angle=0,end angle=45,radius=4]
    -- cycle;
    % rays
    \draw[line width = 2, warmblue] (0,0) -- (4.5,0);
    \draw[line width = 2, warmblue] (0,0) -- ({4.5*cos(45)},{4.5*sin(45)});
    \node[color=warmblue]  at (2.6,-0.4) {$\Gamma_1$};
    \node[color=warmblue]  at (2,2.6) {$\Gamma_2$};
    \node at (2.8,1.) {$V$};
    \draw (0,0) node[circle,draw=black, fill = white,inner sep=2pt, label=left:$p$] {};
    \draw[ocre]  (1.5,0) arc[start angle=0,end angle=45,radius=1.5];

    \draw (1.5,0) node[circle,draw=black, fill = white,inner sep=2pt, label=below:$q_1$] {};
    \draw (1.06,1.06) node[circle,draw=black, fill = white,inner sep=2pt, label=above left:$q_2$] {};
    \node[ocre] at (1.8,0.6) {$\gamma_r$};
    \node[warmblue] at (1,0.5) {$\alpha$};

    \draw[-{Stealth[length=10pt,width=10pt]}, warmblue, line width = 1] (5,0.5) -- (6,0.5);
    \node[warmblue] at (5.4,0.85) {$\tilde{\n}$};

    \begin{scope}[xshift=9cm, yshift=1.5cm]
      % Axes
      \draw[->] (-2,0) -- (2,0) node[below] {$x$};
      \draw[->] (0,-2) -- (0,2) node[left] {$y$};
      \draw[dashed] ({-1.5*cos(-15)},{-1.5*sin(-15)}) -- ({1.5*cos(-15)},{1.5*sin(-15)});
      \draw[dashed] ({-1.5*cos(30)},{-1.5*sin(30)}) -- ({1.5*cos(30)},{1.5*sin(30)});
      \draw ({1.5*cos(-15)},{1.5*sin(-15)}) node[circle,draw=black, fill = white,inner sep=2pt, label=below:$l_1$] {};
      \draw ({1.5*cos(30)},{1.5*sin(30)}) node[circle,draw=black, fill = white,inner sep=2pt, label=above:$l_2$] {};
      \draw[warmblue]  ({1.5*cos(-15)},{1.5*sin(-15)}) arc[start angle=-15,end angle=30,radius=1.5];
      \node[warmblue] at (1.1,0.1) {$\alpha$};

      \draw[thick, dotted] (0,0) -- ({2*cos(120)},{2*sin(120)});
      \draw ({1.8*cos(120)},{1.8*sin(120)}) node[circle,draw=black, fill = white,inner sep=2pt, label=left:$\sigma_r(t_2)$] {};
      \draw[thick, dotted] (0,0) -- ({2*cos(-105)},{2*sin(-105)});
      \draw ({1.2*cos(-105)},{1.2*sin(-105)}) node[circle,draw=black, fill = white,inner sep=2pt, label=left:$\sigma_r(t_1)$] {};
      \draw[rotate=30] (0.3,0) -- (0.3,0.3) -- (0,0.3);
      \draw[rotate=-105] (0.3,0) -- (0.3,0.3) -- (0,0.3);
      \draw[ocre]  ({1.*cos(120)},{1.*sin(120)}) arc[start angle=120,end angle=255,radius=1.];
      \node[ocre] at (-1.3,-0.25) {$\sigma_r$};

    \end{scope}

    \etkz
  \end{center}

  If we prove that from $t_1$ to $t_2$, $\sigma_r(t)$ goes clockwise (i.e., sweeps a negative angle) and does not complete a full turn, then the angle that $\sigma_r$ sweeps out is equal to the representative of
  \[
    \arg l_2-(-1)^k\frac{\pi}{2}-\brk{\arg l_1+(-1)^k\frac{\pi}{2}}
  \]
  in $(-2\pi,0]$, which is equal to $\alpha-\pi$ independently of $k$.

  We start with showing that $\sigma_r$ rotates clockwise. It suffices to prove that for sufficiently small $r$, $\det(\sigma_r(t),\sigma_r'(t))<0$ for every $t$, where we view $(w_1,w_2)$ as a $2\times2$ matrix for $w_1,w_2\in\R^2$. We choose oriented normal coordinates centered at $p$, view $\tilde{\n}$ as a map from $\R^2$ to $\R^2$ and $\gamma$ as a curve in $\R^2$ (choosing normal coordinates is just for convenience, so that we have $\gamma_r(t)=r\gamma_1(t)$, and we assume that $\gamma_1(t)$ is well-defined just to simplify notations).
  \[
    \begin{split}
       & \sgn\det(\sigma_r(t),\sigma_r'(t))                                                            \\
       & \qquad =\sgn\det\brk{d\tilde{\n}_0(\gamma_r(t))+o(r),d\tilde{\n}_{\gamma_r(t)}(\gamma_r'(t))} \\
       & \qquad =\sgn\det(d\tilde{\n}_0)\cdot\sgn\det\brk{\gamma_r(t)+d\tilde{\n}_0^{-1}(o(r)),
      \,\gamma_r'(t)+d\tilde{\n}_0^{-1}d\tilde{\n}_{\gamma_r(t)}-d\tilde{\n}_0)(\gamma_r'(t))}         \\
       & \qquad =-\sgn\brk{r^2\det(\gamma_1(t)+d\tilde{\n}_0^{-1}(o(1)),
      \,\gamma_1'(t)+d\tilde{\n}_0^{-1}(d\tilde{\n}_{\gamma_r(t)}-d\tilde{\n}_0))(\gamma_1'(t))}       \\
       & \qquad \underset{r\to0}{\to}-\sgn\det(\gamma_1(t),\gamma_1'(t)) = -1,
    \end{split}
  \]
  where we used the continuity of $d\tilde{\n}$ at zero, and the fact that $\det d\tilde{\n}(0)<0$ by Gauss' theorem.

  The fact that $\sigma_r(t)$ does not complete a full turn from $t_i$ to $t_{i+1}$ follows from the fact that then the winding number of the full $\sigma_r$ (when we do not restrict it to $V$) would be $<-1$, since we saw $\arg\sigma_r(t)$ is strictly decreasing (this is the meaning of $\sigma_r$ rotating clockwise), but on the other hand it must equal $-1$ since $\tilde{\n}$ is an orientation reversing homeomorphism near $0$.
\end{proof}

%%%%%%%%%%%%%%%%%%%%%%%%%%%%%%%%%%%%%%%%%%%%
\subsection{Gluing the wedges}
\label{sec:gluing}

In this section, we utilize the construction in the proof of \propref{prop:wedge immersion} to construct $W^{2,\infty}$ isometric immersions with branch points:

\begin{theorem}\label{thm:general branch point}
  Let $(\M,\g)$ and $p\in \M$ be as in \propref{prop:wedge immersion}, and let $2\le m\in\bbN$. Then there exists a neighborhood $U$ of $p$ and a $W^{2,\infty}$ ($=C^{1,1}$) isometric immersion $f:U\to\R^3$ such that $\ind_p(\n)=1-m$, where $\n$ is the Gauss map. In particular, if $m\ge3$ then $p$ is a branch point of order $m$.
\end{theorem}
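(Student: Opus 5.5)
Choose $2m$ geodesic rays $\Gamma_1,\dots,\Gamma_{2m}$ emanating from $p$, in counterclockwise order, with consecutive angles $\alpha_1,\dots,\alpha_{2m}\in(0,\pi)$ summing to $2\pi$. (For instance, take $m$ geodesics through $p$ at equal angles $\pi/m$; this gives $2m$ rays with all $\alpha_i=\pi/m$.) These rays cut a small neighborhood $U$ of $p$ into $2m$ wedges $V_1,\dots,V_{2m}$, where $V_i$ is bounded by $\Gamma_i$ and $\Gamma_{i+1}$ (indices mod $2m$).

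For each wedge, apply \propref{prop:wedge immersion} to the pair of geodesics containing its two bounding rays. After shrinking $U$ to a common small geodesic disc, this gives a $C^2$ isometric immersion $f_i$ of $V_i$ mapping $\Gamma_i,\Gamma_{i+1}$ to straight rays in the $xy$-plane with $\n(p)=(0,0,1)$. The intrinsic angle between the rays is $\alpha_i$, and $df_i$ is an isometry at $p$. So I rotate each $f_i$ about the $z$-axis such that consecutive immersions send the shared ray $\Gamma_{i+1}$ to the same line $l_{i+1}$. Since $\sum\alpha_i=2\pi$, the rays close up consistently around the origin. Moreover, $\Gamma_{i+1}$ is a geodesic mapped to a straight line by arclength in both $f_i$ and $f_{i+1}$, so $f_i=f_{i+1}$ on $\Gamma_{i+1}$ and the glued map $f$ is continuous.

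Next I must make the gluing $C^{1,1}$, not just Lipschitz. Along $\Gamma_{i+1}$, the normals of both pieces are perpendicular to $l_{i+1}$. I need $\n$ to match, i.e., the tangent planes must agree along the common ray. I will try to use the freedom of choosing $f_i^\pm$ (reflection across the $xy$-plane), together with the observation that the wedge immersion from \propref{prop:wedge immersion} of the whole neighborhood restricted to adjacent wedges already agrees. The cleaner route: for each geodesic $\Gamma_i\cup(-\Gamma_i)$, the map along it is fixed as a line, and $\n$ along it solves an ODE (rotation in the plane perpendicular to $l$, with rate given by the torsion $-\II(\dot\Gamma,J\dot\Gamma)$, determined by the Riemann invariants on the characteristic). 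Matching requires the same normal data on both sides. I expect this to be the main obstacle. I would handle it by constructing the wedges sequentially, as a characteristic Goursat problem. Given the normal along $\Gamma_{i+1}$ from $f_i$, I solve \eqref{eq:RS} in $V_{i+1}$ with the boundary data of $\Gamma_{i+1}$ prescribed: the characteristic value is fixed, and the transverse Riemann invariant is taken from $f_i$. On $\Gamma_{i+2}$ I impose only the straight-line condition. The last wedge $V_{2m}$ then has data prescribed on both sides, and a Goursat problem with data on two characteristics is still well-posed, just like the problem solved in \propref{prop:wedge immersion}. Thus $\n$ is continuous and $d\n$ is bounded on each closed wedge, so $\n$ is Lipschitz and $f\in W^{2,\infty}$ is isometric.

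Finally, I compute the index via \lemref{lem:wedge rotation}. Since $\n(p)=(0,0,1)$ and $\n$ is continuous, near $p$ the projection $\tilde{\n}$ determines $\n$ via an orientation-reversing homeomorphism onto a neighborhood of the north pole. For small $r$, $\tilde\n\circ\gamma_r$ avoids the origin, since each wedge is a local homeomorphism near $p$ minus $p$. Its total winding is
\[
\frac{1}{2\pi}\sum_{i=1}^{2m}(\alpha_i-\pi)=\frac{2\pi-2m\pi}{2\pi}=1-m .
\]
I then translate the winding number of $\tilde\n\circ\gamma_r$ into the degree of $\n$ on $B_r(p)$ at $\n(p)$, checking signs so that the smooth case $m=1$ gives $-1$; this yields $\ind_p(\n)=1-m$. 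To confirm that $p$ is isolated in $\n^{-1}(\n(p))$: on each wedge $\tilde{\n}$ is a local diffeomorphism by \lemref{lem:wedge rotation}, and the rotation argument there shows $\tilde\n\neq0$ off $p$ for small $r$.
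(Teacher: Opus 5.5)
There is a genuine gap at the step you flag as the main obstacle: making $\n$ continuous across the shared rays. You treat it as a real matching problem and assert that the rotation rate of $\n$ along $\Gamma_{i+1}$ depends on the Riemann invariants of the solution. It does not. Along a straight asymptotic geodesic, with $X$ the unit tangent and $Y$ the unit normal in $\M$, you have $\II(X,X)=0$. The Gauss equation then forces $\II(X,Y)^2=-K_\g$, so $d\n(X)=-\II(X,Y)\,df(Y)=\pm\sqrt{-K_\g}\,df(Y)$ (Beltrami--Enneper). Hence $\n$ moves on the great circle orthogonal to $l_{i+1}$ at a speed fixed by the metric alone, and the sign is constant along the ray. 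Both pieces start at $\n(p)=(0,0,1)$, so their normals coincide along the whole ray exactly when they rotate in the same direction. That direction is reversed by reflecting a wedge immersion across the $xy$-plane (inward versus outward at $p$). Alternating inward and outward immersions around $p$ therefore makes $\n$ continuous. Then $df$ matches along each ray (same tangent, same normal), and $f\in W^{2,\infty}$. This is the paper's proof, and it is the idea you mentioned first and then abandoned.

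The sequential Goursat construction you adopt instead does not work. In the Goursat problem for \eqref{eq:RS}, only one Riemann invariant is prescribed on each characteristic. Once $\Gamma_{i+1}$ is characteristic, the transverse invariant along it solves an ODE along the ray and is fixed by its value at $p$. That value encodes the second asymptotic direction at $p$:
\begin{itemize}
\item for $f_i$ it is the direction of $\Gamma_i$;
\item for $f_{i+1}$ it must be the direction of $\Gamma_{i+2}$, since you also make $\Gamma_{i+2}$ straight.
\end{itemize}
These agree only if $\alpha_i+\alpha_{i+1}=\pi$, which fails for some $i$ as soon as $m\ge 3$, since the $\alpha_i$ sum to $2\pi$. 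The last wedge, with both invariants prescribed on both sides, is doubly overdetermined. It is not ``just like'' \propref{prop:wedge immersion}, which prescribes one invariant per side.

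More structurally, matching both invariants across every ray would make $\II$ continuous at $p$. An indefinite form on $T_p\M$ has only two null lines, which cannot contain $2m\ge 6$ asymptotic rays. The jump of $\II$ across the rays is exactly what produces the branch point, and $C^{1,1}$ regularity only requires $\n$, not $\II$, to match.

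The rest of your outline agrees with the paper: gluing $f$ along the rays, and computing the winding via \lemref{lem:wedge rotation}, which gives $\frac{1}{2\pi}\sum_i(\alpha_i-\pi)=1-m$.
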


\begin{proof}
  We divide the proof into steps:
  \begin{enumerate}[itemsep=0pt,label=(\alph*)]
    \item Construction of $f$.
    \item Proving that $\n$ is everywhere defined and continuous.
    \item Proving that $f$ is in $W^{2,\infty}$.
    \item Proving that $p$ is a branch point of order $m$.
  \end{enumerate}

  \paragraph{Step (a)}
  Choose $2m\ge4$ geodesic rays $\Gamma_1,...,\Gamma_{2m}$ emanating from $p$, ordered counter-clockwise by their indices, such that $\alpha_i<\pi$ for every $i$, where $\alpha_i$ is the angle between $\Gamma_i$ and $\Gamma_{i+1}$.
  Clearly, $\sum_{i=1}^{2m} \alpha_i = 2\pi$.
  Correspondingly, denote by $l_1,...,l_{2m}$ rays in the $xy$-plane in $\R^3$, emanating from the origin, ordered counter-clockwise by their indices, with an angle $\alpha_i$ between $l_i$ and $l_{i+1}$.

  By \propref{prop:wedge immersion} and the discussion following it, there exists for each $i=1,...,2m$ a geodesic sector $V_i$ bounded by $\Gamma_i$ and $\Gamma_{i+1}$, and isometric immersions $g_i^{\pm}:V_i\to\R^3$ mapping $\Gamma_i$ and $\Gamma_{i+1}$ into $l_i$ and $l_{i+1}$ respectively, where $g_i^+$ are inward immersions and $g_i^-$ outward immersions.
  We choose $f_i = g_i^+$ for odd $i$, and $f_i = g_i^{-}$ for even $i$.
  We define $f$ by gluing all the $f_i$'s, i.e., $f\vert_{V_i}=f_i$ for all $i$.
  The map $f$ is continuous by construction.
  The choice of alternating between inward and outward immersions is required for the continuity of $\n$, and the $W^{2,\infty}$ regularity of $f$.

  \paragraph{Step (b)}
  For every $i$, the Gauss map $\n_i$ of $f_i$ is continuous up to the boundary, hence it remains to show that $\n_i,\n_{i+1}$ agree at the transition between the sectors. The argument is a particular case of the Beltrami--Enneper theorem \cite[Chapter 4, Theorem 7]{Spi99}, in the case where the asymptotic curves map into straight lines.

  Fix some $i\in\{1,...,2m\}$ and let ${X,Y}$ be an orthonormal frame for $TM$ along $\Gamma_i$, such that $X$ is parallel to $\Gamma_i$.
  By the definition of the second fundamental form, along $\Gamma_i$,
  \[
    d\n_i(X) = - \II_i(X,X) \, df_i(X) -  \II_i(X,Y) \, df_i(Y).
  \] es because $X$ is an asymptotic direction, whereas by the Gauss equation
  \[
    \II_i(X,X)\II_i(Y,Y) - (\II_i(X,Y))^2 = K_\g.
  \]
  Thus,
  \[
    d\n_i(X) = \pm \sqrt{-K_\g}\, df_i(Y).
  \]
  In particular, the rates of rotation of $\n_i,\n_{i-1}$ along $\Gamma_i$ either coincide or are opposite to each other. Since we chose the sectors to alternate between inward and outward immersions, the rates of rotation coincide. Therefore $\n_i,\n_{i-1}$ agree along $\Gamma_i$, so $\n$ is defined and continuous everywhere.

  \paragraph{Step (c)}
  $f$ is uniformly bounded in $C^2$ in the interior of each sector by construction, and thus, in order to show that $f\in W^{2,\infty}$, it remains to show that the derivative of $f$ is continuous.

  We only need to verify the continuity of $df$ along the sectors' boundaries. Fix some $i$ and let $q\in\Gamma_i$ ($q$ may be the vertex $p$). Choose coordinates around $q$ such that $\partial_1(q)$ is in the direction of $\Gamma_i$ and $\partial_2\perp\partial_1$. $\partial_1f_i(q)=\partial_1f_{i-1}(q)$, because $f_i=f_{i-1}$ along $\Gamma_i$. By the continuity of the normal demonstrated in Step (b), both $\partial_2f_i(q)$ and $\partial_2f_{i-1}(q)$ are either $\n(q)\times\partial_1f_i(q)$ or $\partial_1f_i(q)\times\n(q)$, depending on orientation. From this we conclude that $df_{i-1}\vert_q=df_i\vert_q$. This implies that $df$ is defined and continuous at every point in $\Gamma_i\setminus\{p\}$. As for $p$,  we get inductively that $df_i\vert_p=df_1\vert_p$ for every $i$, so $df$ is defined and continuous at $p$.

  \paragraph{Step (d)}
  We prove that the local degree of the Gauss map at $f(p)$ is $1-m$, hence $f(p)$ is a branch point if $m>2$. Define $\tilde{\n}$, $\sigma_r$ like in \lemref{lem:wedge rotation}. We find $\deg_p\n$ by calculating the winding number of $\sigma_r$ around $0$, for $r$ sufficiently small.
  The winding number is the sum of the angles swept out by $\sigma_r$ in each sector, divided by $2\pi$, which by \lemref{lem:wedge rotation} is equal to
  \[
    \frac{1}{2\pi}\sum_{i=1}^{2m}(\alpha_i-\pi)=1-m.
  \]
\end{proof}

%%%%%%%%%%%%%%%%%%%%%%%%%%%%%%%%%%%%%%%%%%%%%
\addcontentsline{toc}{section}{References}
\begingroup
\footnotesize
\bibliographystyle{amsalpha}
\bibliography{arXiv_vr1.bbl}
\endgroup

\end{document}